\numberwithin{equation}{section}
\newtheorem{definition}{Definition}[section]
\newtheorem{lemma}[definition]{Lemma}
\newtheorem{theorem}[definition]{Theorem}
\newtheorem{proposition}[definition]{Proposition}
\newtheorem{remarkth}[definition]{Remark}
\newtheorem{example}[definition]{Example}
\newenvironment{remark}{\begin{remarkth}\upshape}{\hfill$\diamond$\end{remarkth}}
\renewcommand{\emph}[1]{{\bfseries\itshape{#1}}}
\newcommand{\R}{\mathbb{R}}      %Numeros reales
\newcommand{\T}{\mathbb{T}}
\newcommand{\ltilde}[3][0]{\altura=0 \advance\altura by #1
	\ancho=#2 \anchom=\ancho \divide\anchom by 2
	\anchoa=\ancho \divide\anchoa by 4
	\anchob=\anchom \advance\anchob by \anchoa
	\kern-3pt \begin{array}[b]{c}
		\begin{picture}(1,1)(\anchom,-\altura)
		\qbezier(0,2)(\anchoa,5)(\anchom,2)
		\qbezier(\anchom,2)(\anchob,-1)(\ancho,4)
		\qbezier(0,2)(\anchoa,4.5)(\anchom,1.8)
		\qbezier(\anchom,1.8)(\anchob,-1.5)(\ancho,4)
		\end{picture} \\[-4pt]{#3}
	\end{array} \kern-4pt    }
\newcommand{\lhat}[3][0]{\altura=0 \advance\altura by #1
	\ancho=#2 \anchom=\ancho \divide\anchom by 2
	\anchoa=\ancho \divide\anchoa by 4
	\anchob=\anchom \advance\anchob by \anchoa
	\kern-3pt \begin{array}[b]{c}
		\begin{picture}(1,1)(\anchom,-\altura)
		\qbezier(0,2)(\anchoa,4)(\anchom,6)
		\qbezier(\anchom,6)(\anchob,4)(\ancho,2)
		\qbezier(0,2)(\anchoa,3.8)(\anchom,5.6)
		\qbezier(\anchom,5.6)(\anchob,3.8)(\ancho,2)
		\end{picture} \\[-4pt] {#3}
	\end{array} \kern-4pt    }
\newcommand{\lcf}{\lbrack\! \lbrack}
\newcommand{\rcf}{\rbrack\! \rbrack}
\newcommand\map[3]{#1\ \colon\ #2\longrightarrow#3}
\newcommand{\lvec}[1]{\overleftarrow{#1}}
\newcommand{\rvec}[1]{\overrightarrow{#1}}
\newcommand{\e}{\mathrm{e}}
\newcommand\prol{\@ifstar{\@proldf}{\@prolpf}}  %% if * dual else primal
\def\@prolpf{\@ifnextchar[{\@prolpf@wrt}{\@prolpf@}}
\def\@prolpf@wrt[#1]#2{\@ifnextchar[{\@prolpf@wrt@at{#1}{#2}}{\@prolpf@wrt@{#1}{#2}}}
\def\@prolpf@wrt@at#1#2[#3]{\prolsymbol^{#1}_{#3}#2}
\def\@prolpf@wrt@#1#2{\prolsymbol^{#1}#2}
\def\@prolpf@#1{\@ifnextchar[{\@prolpf@at{#1}}{\@prolpf@@{#1}}}
\def\@prolpf@at#1[#2]{\prolsymbol_{#2}#1}
\def\@prolpf@@#1{\prolsymbol#1}
\def\@proldf{\@ifnextchar[{\@proldf@wrt}{\@proldf@}}
\def\@proldf@wrt[#1]#2{\@ifnextchar[{\@proldf@wrt@at{#1}{#2}}{\@proldf@wrt@{#1}{#2}}}
\def\@proldf@wrt@at#1#2[#3]{\prolsymbol^{*#1}_{#3}#2}
\def\@proldf@wrt@#1#2{\prolsymbol^{*#1}#2}
\def\@proldf@#1{\@ifnextchar[{\@proldf@at{#1}}{\@proldf@@{#1}}}
\def\@proldf@at#1[#2]{\prolsymbol^*_{#2}#1}
\def\@proldf@@#1{\prolsymbol^*#1}
\def\prolsymbol{\mathcal{T}}
\def\lcf{\lbrack\! \lbrack}
\def\rcf{\rbrack\! \rbrack}
\newcommand{\cinfty}[1]{C^\infty(#1)}
\newcommand{\set}[2]{\left\{\,#1\left.\vphantom{#1#2}\,\right\vert\,#2\,
	\right\}}
\newcommand{\pd}[2]{\frac{\partial #1}{\partial #2}}
\newcommand{\vectorfields}[1]{\mathfrak{X}(#1)}
\newcommand{\sode}{{\textsc{sode}}}
\begin{document}
	{\Large
		
		\title[Local convexity for second order differential equations on a Lie algebroid]{Local convexity for second order differential equations on a Lie algebroid}
		
		\author[J.C. Marrero]{J.C. Marrero}
		\address{J.C. Marrero:
			ULL-CSIC Geometr\'{\i}a Diferencial y Mec\'anica Geom\'etrica\\
			Departamento de Matem\'aticas, Estad{\'\i}stica e IO, Secci\'on de
			Ma\-te\-m\'a\-ti\-cas y F{\'\i}sica, Universidad de la Laguna, La Laguna,
			Tenerife, Canary Islands, Spain} \email{jcmarrer@ull.edu.es}
		
		\author[D. Mart\'{\i}n de Diego]{D. Mart\'{\i}n de Diego}
		\address{D. Mart\'{\i}n de Diego:
			Instituto de Ciencias Matem\'aticas (CSIC-UAM-UC3M-UCM) \\ C/Nicol\'as
			Cabrera 13-15, 28049 Madrid, Spain} \email{david.martin@icmat.es}
		
		\author[E. Mart\'{\i}nez]{E. Mart\'{\i}nez}
		\address{E. Mart\'{\i}nez:
			Departamento de Matem\'atica Aplicada e IUMA, Facultad de
			Ciencias, Universidad de Zaragoza, 50009 Zaragoza, Spain}
		\email{emf@unizar.es}

		\thanks{{}}
		
		\keywords{ convexity theory, second order differential equation, Lie algebroid, Lie groupoid, exponential map, homogeneous quadratic function}
		
		\subjclass{17B66, 22A22, 34A26, 34B15}
		
		\maketitle
		
		\hfill{\small\it{Dedicated to the memory of K Mackenzie}}
		
		\begin{abstract}
			A theory of local convexity for a second order differential equation (\sode\ ) on a Lie algebroid is developed. The particular case when the \sode\ is homogeneous quadratic is extensively discussed. 
		\end{abstract}

		%\hfill{\it{Dedicated to the memory of K Mackenzie}}
		
		\tableofcontents

\section{Introduction}

Lie algebroids and groupoids were fundamental mathematical objects in the research by K Mackenzie. In fact, an standard reference for these topics is his book \cite{Mac}. On the other hand, the relation of Lie algebroids and groupoids with Physics is increasingly important, specially with classical and quantum mechanics and, particularly, with Lagrangian mechanics.

\noindent{\bf Lagrangian mechanics on Lie algebroids.}
Lagrangian mechanics on Lie algebroids has deserved a lot of interest in recent years from the seminal paper by Weinstein \cite{We}. The notion of a Lie algebroid \cite{Mac} allows to discuss general Lagrangian systems beyond the ones defined on the tangent bundle of the configuration manifold. These include systems determined by Lagrangian functions on Lie algebras, Atiyah algebroids, action Lie algebroids...These systems appear, in a natural way, when one applies reduction by a symmetry Lie group (see \cite{CoLeMaMaMa,LeMaMa,Ma0,Ma,We}).

The typical definition of a mechanical Lagrangian function $L: A \to \mathbb{R}$ on a Lie algebroid $\tau: A \to Q$ is
\[
L(a) = K_{g}(a) = \frac{1}{2} g(a, a), \; \; \mbox{ for } a\in A,
\]
where $g: A \times_Q A \to \mathbb{R}$ is a bundle metric on $A$ or, more generally,
\[
L(a) = K_{g}(a) - V(\tau(a)) = \frac{1}{2} g(a, a) -V(\tau(a)), \; \; \mbox{ for } a \in A,
\]
with $V \in C^{\infty}(Q)$. In the first case, $L$ represents the kinetic energy on $A$ induced by the bundle metric $g$ and, in the second case, $L$ is the difference of the kinetic energy and the potential energy induced by the real $C^{\infty}$-function $V$ on $Q$. 

In both cases, the solutions of the Euler-Lagrange equations for $L$ (the dynamical equations) are the integral curves of a second order differential equation (a \sode$\,$) $\Gamma_L$ on $A$, that is, a vector field on $A$ whose integral curves are admissible. This means that an integral curve $\gamma: I \to A$ of $\Gamma_L$ satisfies the following condition
\[
\frac{d}{dt}(\tau \circ \gamma) = \rho \circ \gamma,
\]
where $\rho: A \to TQ$ is the anchor map of $A$. Moreover, in the first case when $L$ is the kinetic energy induced by a bundle metric on $A$, $\Gamma_L$ is a homogeneous quadratic \sode\, i.e., the (local) coefficients of $\Gamma_L$ in the generalized velocities are fiberwise homogeneous quadratic functions.

\noindent{\bf Our motivation.}
The explicit integration of the \sode\ $\Gamma_L$ is, in general, quite complicated. So, of particular interest, it is the construction of geometric integrators for Lagrangian systems using a discrete variational principle. In this direction, a lot of effort has been devoted to the case when the discrete Lagrangian function is defined on the cartesian product $Q \times Q$ of a smooth manifold $Q$ (see \cite{MaWe,PaCu}). 
$Q \times Q$ plays the role of a discretized version of the standard velocity phase space $TQ$ and the discrete Lagrangian function $L_d: Q \times Q \to \mathbb{R}$ is an approximation of the continuous Lagrangian function $L: TQ \to \mathbb{R}$. The discrete evolution operator is given by a smooth map
\[
\xi: Q \times Q \to Q \times Q, \; \; (q_0, q_1) \to \xi(q_0, q_1) = (q_1, q_2)
\]
which inherits some of the geometric properties of the flow of the \sode\ $\Gamma_L$ at a fixed sufficiently small time $h > 0$. For the comparison between $\xi$ and the exact flow of $\Gamma_L$ at time $h$, it is crucial the construction of the exact discrete Lagrangian function whose discrete evolution is just the flow of $\Gamma_L$ at time $h$. For the construction of this discrete Lagrangian function, one must use (local) convexity results for the standard \sode\ $\Gamma_L$ (see \cite{MaWe,PaCu}).

Note that $TQ$ is just the Lie algebroid of the pair Lie groupoid $Q \times Q$. So, when one considers the more general case of a continuous Lagrangian function defined on the Lie algebroid $AG$ of a Lie groupoid $G$, then the Lie groupoid $G$ plays the role of the discrete phase space. In fact, a theory on discrete Lagrangian mechanics on Lie groupoids have been developed in last years (see \cite{MaMaMa,MaMaMa2,
	mo-ve,We}). For the analysis of the error, and as in the standard case when the Lie groupoid is the pair groupoid $Q \times Q$, it is very important the construction of the exact discrete Lagrangian function on the Lie groupoid $G$ associated with a continuous Lagrangian function on the Lie algebroid $AG$ of $G$. In turn, for the introduction of this discrete Lagrangian function, one must previously have a theory of local convexity for a \sode\ on $AG$.  

The aim of this paper is to provide this theory. 

\noindent{\bf The main ideas in the paper.}
For the previous purpose, we proceed as follows:
\begin{enumerate}
	\item
	We prove appropiate convexity results for a \sode\ which is defined in a special Lie algebroid: the vertical bundle of a fibration;
	\item
	We introduce a Lie algebroid morphism
	\[
	\Psi: V\alpha \to AG
	\]
	between the vertical bundle of the source map $\alpha: G \to Q$ of the Lie groupoid $G$ and $AG$, which is fiberwise bijective and
	\item
	We prove that given a \sode\ $\Gamma$ on $AG$, there exists a unique \sode\ $\tilde{\Gamma}$ on $V\alpha$ which is $\Psi$-related with $\Gamma$ and, then, we use 1.
\end{enumerate}
\noindent{\bf The structure of the paper.}
The paper is structured as follows. In Section \ref{section2}, we discuss local convexity results for an standard \sode\ on the tangent bundle of a manifold. In Section \ref{sec4}, we extend these results for the more general case when the \sode\ is defined on a Lie algebroid. In Section \ref{sec5}, we consider the particular case of a homogeneous quadratic \sode\ on a Lie algebroid. Section \ref{conclusions-future-work} contains the conclusions and some comments on the future work. The paper ends with two appendices which contain the proofs of Theorems \ref{convexity1} and \ref{isomorfismo-exp-h0-q0} and some basic results on Lie algebroids and groupoids. 

\noindent{\bf Relation with our preprint \cite{MaMaMa3}.} A part of the results in this paper (Sections \ref{section2} and \ref{sec4}) are contained in our unpublished preprint \cite{MaMaMa3}. The reason is that we have decided to split this preprint in two different parts. The first one is this paper and, as you can see, it contains the results in \cite{MaMaMa3} plus a discussion on the particular case of a homogeneous quadratic \sode\ in a Lie algebroid. Moreover, another difference is that the results in Section \ref{section2} of this paper are presented in a slightly different way which is, in our opinion, clearer and more accurate. In particular,
we include an analytical explicit proof of Theorem \ref{isomorfismo-exp-h0-q0} (see Appendix \ref{Hartmann}) which is missing in our preprint \cite{MaMaMa3}. 

On the other hand, in the second part in which we split the preprint \cite{MaMaMa3}, we will introduce the discrete exact Lagrangian function associated with a continuous regular Lagrangian function on the Lie algebroid of a Lie groupoid and, in addition, we will discuss the variational error analysis in this setting (see \cite{MaMaMa4}).    

\section{Local convexity  for standard second order differential
	equations}\label{section2}

Let $Q$ be a smooth manifold, $TQ$  its tangent bundle and $T^{2}Q$ the space of second jets of curves on $Q$. We
will denote by $\tau_{Q}: TQ \to Q$ the canonical projection. As we know, from local coordinates $(q^i)$ on $Q$,
we can induce local coordinates $(q^i, \dot{q}^i)$ on $TQ$ and $(q^i, \dot{q}^i, \ddot{q}^i)$ on $T^2Q$.

A standard \sode\ on $Q$ is a subset $S$ of $T^2Q$. If the equation $S$ is explicit, which means that 
\[
\ddot{q}^i=\xi^i (q, \dot{q}), \ \forall i\, ,
\]
then, using the canonical inclusion of $T^2Q$ in $TTQ$, 
$S$ may be transformed to a vector field $\Gamma$ on $TQ$, that is, $S=\hbox{im}\Gamma$.
In fact, the local expression of $\Gamma$ is
\begin{equation}\label{Loc-exp-Gamma}
\Gamma (q, \dot{q})=\dot{q}^i\frac{\partial}{\partial q^i}+\xi^i(q, \dot{q})\frac{\partial}{\partial \dot{q}^i}\; .
\end{equation}
Along this paper, we will assume that our \sode\ is explicit and we will refer to it as the vector field 
$\Gamma$ on $TQ$. Note that, in such a case, the integral curves of $\Gamma$ are tangent lifts of curves in $Q$: 
the trajectories of the \sode\ $\Gamma$. In fact, these curves $t\rightarrow (q^i(t))$ in $Q$ satisfy the equations
%vector field on $TQ$ whose integral curves are tangent lifts of curves on $Q$. These curves in $Q$ are the trajectories of the \sode\ and they are  % the solutions of a system of explicit second order differential equations. That is, if we take  canonical coordinates $(q^i, \dot{q}^i)$ on $TQ$ %then a \sode\ $\Gamma$   is locally written as follows: 
%\begin{equation}\label{Loc-exp-Gamma}
%\Gamma (q, \dot{q})=\dot{q}^i\frac{\partial}{\partial q^i}+\xi^i(q, \dot{q})\frac{\partial}{\partial \dot{q}^i}\; ,
%\end{equation}
%and its trajectories are the curves $t\rightarrow (q^i(t))$ such that  
\[
\frac{d^2 q^i}{dt^2}(t)=\xi^i(q(t), \frac{dq}{dt}(t))\; ,\ \forall i\; .
\] 
Therefore, a first convexity
theorem for a \sode\   may be deduced using the theory of explicit second order differential equations. 

%A vector field $\Gamma$ on $TQ$ is said to be \emph{a second order
%differential equation (SODE)} if
%\[
%(T_{v}\tau_{TQ})(\Gamma(v)) = v,
%\]
%for all $v\in TQ$ (see, for instance, \cite{LeRo}).

%One may prove that $\Gamma$ is a SODE if and only if
%\[
%\gamma_{v} = \dot{\sigma}_{v}, \; \; \mbox{ for all } v \in TQ
%\]
%where $\gamma_{v}: I_{v} \to TQ$ is the maximal integral curve of
%$\Gamma$ such that $\gamma_{v}(0) = v$ and $\sigma_{v} = \tau_{TQ}
%\circ \gamma_{v}: I_{v} \to Q$.
%
%The curve $\sigma_{v}: I_{v} \to Q$ is \emph{a trajectory of
%$\Gamma$}.
%
%It is clear that if $v \in T_{q}Q$, with $q\in Q$, then there
%exists a unique maximal trajectory $\sigma_{v}: I_{v} \to Q$ such
%that
%\[
%\sigma_{v}(0) = q, \; \; \; \dot{\sigma}_{v}(0) = v.
%\]
%Moreover, we have the following result.
\begin{theorem}\label{convexity1}
	Let $\Gamma$ be a \sode\ in $Q$ and $q_{0}$ be a point of $Q$. Then,
	one may find a sufficiently small positive number $h_{0}$, a family of tangent vectors of $Q$ at $q_0$, 
	\[
	v_{(h, q_0)} \in T_{q_{0}}Q, \; \; \mbox{ for } 0 < h \leq h_0,
	\]
	and two compact subsets $C$ and $\bar{C}$ of $Q$ and $TQ$, respectively, with $q_0 \in C$ and $v_{(h, q_0)} \in \bar{C}$, such that there exists
	a unique trajectory of $\Gamma$
	\[
	\sigma_{q_0q_0h}: [0, h] \to C \subseteq Q
	\]
	satisfying
	\[
	\sigma_{q_0q_0h}(0) = q_0, \; \; \; \sigma_{q_0q_0h}(h) = q_0,
	\]
	and 
	\[
	\dot{\sigma}_{q_0q_0h}(t) \in \bar{C}, \; \mbox{ for every } t \in [0, h].
	\]
\end{theorem}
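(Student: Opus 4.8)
The plan is to reformulate the two–point condition $\sigma(0)=\sigma(h)=q_0$ as a single equation for the \emph{initial velocity} and to solve it by the implicit function theorem. Working in a coordinate chart centred at $q_0$ (so that $q_0=0$), let $\Phi_t$ denote the local flow of the vector field $\Gamma$ on $TQ$ and write $\Phi_t(0,v)=(Q(t,v),\dot Q(t,v))$ in the induced coordinates, where $v\in T_{q_0}Q$ ranges over a neighbourhood of the origin. Because $\Gamma$ is a \sode, the local expression \eqref{Loc-exp-Gamma} forces $Q(0,v)=0$ and $\partial_t Q(0,v)=\dot Q(0,v)=v$. A trajectory of $\Gamma$ issuing from $(0,v)$ returns to $q_0$ at time $h$ precisely when $Q(h,v)=0$, so the whole problem reduces to solving this last equation for $v$ as a function of $h$.

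The difficulty is that $Q(0,v)=0$ for \emph{every} $v$, so the equation $Q(h,v)=0$ is totally degenerate at $h=0$ and the implicit function theorem cannot be applied directly. I would desingularise it by dividing out the common factor $h$: by Hadamard's lemma
\begin{equation*}
\widetilde Q(h,v):=\int_0^1 \partial_t Q(sh,v)\,ds
\end{equation*}
is smooth in $(h,v)$, satisfies $Q(h,v)=h\,\widetilde Q(h,v)$, and $\widetilde Q(0,v)=\partial_t Q(0,v)=v$. Thus for $h\neq 0$ the equation $Q(h,v)=0$ is equivalent to $\widetilde Q(h,v)=0$, while at $h=0$ the latter reads $v=0$; so $(h,v)=(0,0)$ is a solution and $D_v\widetilde Q(0,0)=\mathrm{Id}$ is invertible. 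The implicit function theorem then yields $h_0>0$ and a smooth family $v_{(h,q_0)}$, defined for $|h|\le h_0$ with $v_{(0,q_0)}=0$, such that $\widetilde Q(h,v_{(h,q_0)})=0$; hence $\sigma_{q_0q_0h}(t):=Q(t,v_{(h,q_0)})$ is a trajectory of $\Gamma$ with $\sigma_{q_0q_0h}(0)=\sigma_{q_0q_0h}(h)=q_0$.

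It remains to produce the compact sets and the uniqueness. Since $v_{(h,q_0)}\to 0$ and $h\to 0$ together, for $h_0$ small the image of $t\mapsto\Phi_t(0,v_{(h,q_0)})$ over $[0,h]$ shrinks towards $(q_0,0)$; I would take $\bar C$ to be a small closed coordinate ball about $(q_0,0)$ in $TQ$ containing all these images, and $C:=\tau_Q(\bar C)$, so that $q_0\in C$, $v_{(h,q_0)}\in\bar C$ and $\dot\sigma_{q_0q_0h}(t)\in\bar C$. For uniqueness I would further shrink $\bar C$ so that its fibre $\bar C\cap T_{q_0}Q$ lies inside the neighbourhood of the origin on which the implicit function theorem guarantees a unique root in $v$. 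Then any trajectory valued in $C$ with velocity in $\bar C$ and satisfying $\sigma(0)=\sigma(h)=q_0$ has initial velocity $\dot\sigma(0)\in\bar C\cap T_{q_0}Q$ solving $Q(h,\cdot)=0$, which forces $\dot\sigma(0)=v_{(h,q_0)}$ and hence $\sigma=\sigma_{q_0q_0h}$.

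The main obstacle is precisely the degeneracy at $h=0$: the rescaling by $h$ — equivalently, normalising the exponential-type map $v\mapsto\tau_Q(\Phi_h(q_0,v))$ — is what turns an ill-posed limit into an implicit-function problem with invertible linearisation, and it is here that the \sode\ condition $\partial_t Q(0,v)=v$ is indispensable. The remaining effort is bookkeeping: one must match $C$ and $\bar C$ to the implicit-function neighbourhood so that local uniqueness of the root upgrades to uniqueness among all admissible trajectories. A more hands-on alternative, closer to the spirit of the appendix, would replace the implicit function theorem by the classical two-point boundary value theory for second order ordinary differential equations (in the style of Hartman), but the argument above seems the most economical route to the statement.
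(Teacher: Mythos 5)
Your proposal is correct, but it takes a genuinely different route from the paper. The paper's proof (Appendix \ref{Hartmann}) works in a chart centred at $q_0$ and invokes the classical two-point boundary value theorem for second order systems (Theorem \ref{Hartman}, i.e.\ Corollary 4.1 of Chapter XII in \cite{Ha}): under explicit smallness conditions $\tfrac{Ch_0^2}{8}+\tfrac{\dot C h_0}{2}<1$, $\tfrac{Mh_0^2}{8}\le R$, $\tfrac{Mh_0}{2}\le\dot R$ on the Lipschitz constants and bound of $\xi$, existence and uniqueness of the solution in the tube $\|q\|\le R$, $\|\dot q\|\le\dot R$ come out in a single application, the compact sets $C,\bar C$ are the preimages of the closed balls $\bar B(0;R)$, $\bar B(0;R)\times\bar B(0;\dot R)$, and $v_{(h,q_0)}$ is only defined a posteriori as $\dot\sigma_{q_0q_0h}(0)$. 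You instead solve directly for the initial velocity by desingularising the return map, $Q(h,v)=h\,\widetilde Q(h,v)$ with $\widetilde Q(0,v)=v$, and applying the implicit function theorem at $(0,0)$; your identification of the degeneracy at $h=0$ as the real obstacle, and your bookkeeping matching $\bar C\cap T_{q_0}Q$ to the uniqueness neighbourhood of the implicit function theorem, are both sound (note that your uniqueness argument only needs $\dot\sigma(0)\in\bar C$, which is weaker than the hypothesis in the statement, so nothing is lost). What each approach buys: yours is more self-contained, gives smoothness of $h\mapsto v_{(h,q_0)}$ for free, and---since $D_v\exp^{\Gamma}_{(h,q_0)}=D_vQ(h,\cdot)=h\,D_v\widetilde Q(h,\cdot)$ with $D_v\widetilde Q$ invertible near $(0,0)$---it delivers the nondegeneracy statement of Theorem \ref{isomorfismo-exp-h0-q0} essentially at no extra cost, whereas the paper must prove that separately via the variational equation and Lemma 3.1 of Chapter XII in \cite{Ha}. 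The paper's route, in exchange, yields quantitative control on how small $h_0$ must be relative to $C$, $\dot C$, $M$ and on the size of the tube, which is the form in which the estimate is reused later in the appendix.
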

\begin{proof} A proof of this result may be found in Appendix \ref{Hartmann}.
\end{proof}

%\begin{remark}{\rm Note that if we have a positive number $h_{0} >
%0$ as in Theorem \ref{convexity1} then for every $h$, $0< h \leq
%h_{0}$, one may find a tangent vector $v_{(h, q_0)} \in T_{q_0}Q$ and two compact subsets $C_h$ and $\bar{C}_h$ of $Q$ and $TQ$, respectively, with $q_0 \in C_h$ and $v_{(h, q_0)} \in \bar{C}_h$, such that there exists
%a unique trajectory of $\Gamma$
%\[
%\sigma_{q_0q_0}: [0, h] \to C_h \subseteq Q
%\]
%satisfying
%\[
%\sigma_{q_0q_0}(0) = q_0, \; \; \; \sigma_{q_0q_0}(h) = q_0,
%\]
%and 
%\[
%\dot{\sigma}_{q_0q_0}(0) = v_{(h, q_0)}, \; \; \dot{\sigma}_{q_0q_0}(t) \in \bar{C}_h, \; \mbox{ for every } t \in [0, h].
%\]
%(see the proof of Theorem \ref{convexity1} in Appendix \ref{Hartmann}).}
%\end{remark}
Let $\Gamma$ be a \sode\ on $Q$.

We will denote by $\Phi^{\Gamma}$ the flow of $\Gamma$
\[
\Phi^{\Gamma}: D^{\Gamma}\subseteq \mathbb{R} \times TQ \to TQ.
\]
Here, $D^{\Gamma}$ is the open subset of $\mathbb{R} \times TQ$
given by
\[
D^{\Gamma} = \{(t, v) \in \mathbb{R} \times TQ \mid \Phi^{\Gamma}(\cdot, v) \mbox
{ is defined at least in } [0, t] \}.
\]
Now, if $q_{0}$ is a point of $Q$ and $h \geq 0$, we may
consider the open subset $D^{\Gamma}_{(h, q_{0})}$ of
$T_{q_{0}}Q$ given by
\[
D^{\Gamma}_{(h,q_{0})} = \{v \in T_{q_{0}}Q \mid (h, v) \in
D^{\Gamma} \}.
\]
Note that if $h > 0$ is sufficiently small then it is clear
that $D^{\Gamma}_{(h,q_{0})} \neq \emptyset $. Moreover, we
may introduce the exponential map associated with $\Gamma$
at $q_{0}$ for the time $h$ as follows
\begin{equation}\label{Def-exp-h0-q0}
exp^{\Gamma}_{(h, q_{0})}(v) = (\tau_{Q} \circ
\Phi^{\Gamma})(h, v), \; \; \mbox{ for } v\in
D^{\Gamma}_{(h, q_{0})}.
\end{equation}
We remark that the map $exp^{\Gamma}_{(0, q_{0})}$ is constant.
However, we have the following result.
\begin{theorem}\label{isomorfismo-exp-h0-q0}
	Let $\Gamma$ be a \sode\ in $Q$ and $q_0$ a point in $Q$. We take a sufficiently small positive real number $h$ and $v_{(h, q_0)} \in T_{q_0}Q$ as in Theorem \ref{convexity1}. Then,
	\[
	v_{(h, q_0)} \in D^{\Gamma}_{(h, q_0)}, \; \; exp^{\Gamma}_{(h, q_0)}(v_{(h, q_0)}) = q_0,
	\]
	and 
	\[
	T_{v_{(h, q_0)}}exp^{\Gamma}_{(h, q_0)}: T_{v_{(h, q_0)}}D^{\Gamma}_{(h, q_0)} \to T_{q_0}Q
	\]
	is an isomorphism.
\end{theorem}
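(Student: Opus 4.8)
The plan is to settle the first two assertions directly from the construction in Theorem~\ref{convexity1}, and to reduce the isomorphism statement to the nondegeneracy of the short-time Jacobi matrix of the flow, which I would control by a Taylor expansion in the time variable. For the first two assertions, recall that $v_{(h,q_0)}$ is the initial velocity $\dot\sigma_{q_0q_0h}(0)$ of the connecting trajectory, and that the trajectories of $\Gamma$ are exactly the base projections $\tau_{Q}\circ\Phi^{\Gamma}(\cdot,v)$ of its integral curves; hence $\tau_{Q}\circ\Phi^{\Gamma}(\cdot,v_{(h,q_0)})=\sigma_{q_0q_0h}$ on $[0,h]$. Since $\sigma_{q_0q_0h}$ is defined on the whole of $[0,h]$, the integral curve through $v_{(h,q_0)}$ is defined at least up to time $h$, that is $(h,v_{(h,q_0)})\in D^{\Gamma}$, which is exactly $v_{(h,q_0)}\in D^{\Gamma}_{(h,q_0)}$. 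Evaluating at $t=h$ and using $\sigma_{q_0q_0h}(h)=q_0$ gives $exp^{\Gamma}_{(h,q_0)}(v_{(h,q_0)})=(\tau_{Q}\circ\Phi^{\Gamma})(h,v_{(h,q_0)})=q_0$.

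For the isomorphism, note that $D^{\Gamma}_{(h,q_0)}$ is an open subset of the vector space $T_{q_0}Q$, so its tangent space at $v_{(h,q_0)}$ is canonically identified with $T_{q_0}Q$; the target is also $T_{q_0}Q$, and both have dimension $\dim Q$. Thus it suffices to prove that $T_{v_{(h,q_0)}}exp^{\Gamma}_{(h,q_0)}$ is nonsingular. Working in a chart around $q_0$, let $q(t;v)$ be the solution of $\ddot q^i=\xi^i(q,\dot q)$ with $q(0)=q_0$ and $\dot q(0)=v$, so that $exp^{\Gamma}_{(h,q_0)}(v)=q(h;v)$ and, in these coordinates, the differential is represented by the Jacobi matrix $J(h;v)=(\partial q^i/\partial v^j)(h;v)$. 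Smooth dependence on initial data, together with $q(0;v)=q_0$ and $\dot q(0;v)=v$, yields $J(0;v)=0$ and $\dot J(0;v)=\mathrm{Id}$; equivalently, the expansion $q(t;v)=q_0+tv+\frac{t^2}{2}\xi(q_0,v)+O(t^3)$ gives $J(t;v)=t\,\mathrm{Id}+O(t^2)$. By Hadamard's lemma I may write $J(t;v)=t\,\Theta(t;v)$ with $\Theta$ smooth and $\Theta(0;v)=\mathrm{Id}$.

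It remains to make the invertibility of $\Theta$ uniform in $h$, which is the only delicate point, since $v_{(h,q_0)}$ itself varies with $h$ and a purely pointwise short-time estimate would not suffice. Here I would use that Theorem~\ref{convexity1} confines all the vectors $v_{(h,q_0)}$ to the fixed compact set $\bar{C}\cap T_{q_0}Q$, and that the flow is defined on $[0,h_1]\times N$ for some $h_1>0$ and some neighborhood $N$ of this compact set (by openness of $D^{\Gamma}$ and compactness). Since $\det\Theta(0;v)=1$ there, continuity and compactness give, after shrinking $h_1$ if necessary, that $\det\Theta(t;v)\neq 0$ on $[0,h_1]\times(\bar{C}\cap T_{q_0}Q)$. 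Replacing the $h_0$ of Theorem~\ref{convexity1} by this $h_1$, we conclude that $\det J(h;v_{(h,q_0)})=h^{\dim Q}\det\Theta(h;v_{(h,q_0)})\neq 0$, so $T_{v_{(h,q_0)}}exp^{\Gamma}_{(h,q_0)}$ is an isomorphism. The main obstacle is precisely this uniformity in $h$; the rest is the routine variational equation of the \sode.
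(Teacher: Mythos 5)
Your proof is correct, but the key step is argued by a genuinely different route than the paper's. For the first two assertions you do the same thing the paper does (read them off from Theorem~\ref{convexity1}). For the nonsingularity of $T_{v_{(h,q_0)}}exp^{\Gamma}_{(h,q_0)}$, the paper sets up the variational (Jacobi) equation along the trajectory: the Jacobian matrix $U(t)=(D_{\dot q}x)(t,q_0,\dot q_{0h})$ satisfies a linear homogeneous second-order system $\ddot U=B(t)U+F(t)\dot U$ with $U(0)=0$, $\dot U(0)=\mathrm{Id}$, and then invokes Hartman's boundary-value machinery (Theorem~\ref{Hartman} applied to the linearized system to get uniqueness of the trivial solution of the two-point problem for small times, followed by Lemma~3.1 of Chapter~XII of Hartman) to conclude that $U(p)$ is regular for $0<p\le p_0$ uniformly in $h$. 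You instead bypass the linearized boundary-value problem entirely: you factor the Jacobi matrix as $J(t;v)=t\,\Theta(t;v)$ via Hadamard's lemma, note $\Theta(0;v)=\mathrm{Id}$, and get uniform invertibility of $\Theta$ on $[0,h_1]\times(\bar C\cap T_{q_0}Q)$ by continuity and compactness. Both arguments draw their uniformity in $h$ from the same source, namely the compact set $\bar C$ of Theorem~\ref{convexity1} that confines the initial velocities $v_{(h,q_0)}$; you correctly identify this as the only delicate point, since $v_{(h,q_0)}$ moves with $h$. Your version is more elementary and self-contained (it needs only smooth dependence on initial conditions and a Taylor/Hadamard factorization), while the paper's version fits into the same Hartman framework already used for Theorem~\ref{convexity1} and makes the link to disconjugacy of the Jacobi equation explicit, which is conceptually closer to the classical ``no conjugate points for short times'' statement in Riemannian geometry.
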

\begin{proof} A proof of this result may be found in Appendix \ref{Hartmann}.
\end{proof}
From Theorem \ref{isomorfismo-exp-h0-q0}, we have that there exist open subsets ${\mathcal U}_0$ and $U$ in $D^{\Gamma}_{(h, q_0)}$ and $Q$, respectively, with $v_{(h, q_0)} \in {\mathcal U}_0$ and $q_0 \in U$, such that  the map 
\[
exp^{\Gamma}_{(h, q_0)}: {\mathcal U}_0 \subseteq D^{\Gamma}_{(h, q_0)} \to U\subseteq Q
\]
is a diffeomorphism. 

Next, we will consider
the open subset $D^{\Gamma}_{h}$ of $TQ$ given by
\[
D^{\Gamma}_{h} = \{v \in TQ \mid (h, v) \in D^{\Gamma} \}.
\]
Note that
\[
v \in D^{\Gamma}_{h} \Longrightarrow D^{\Gamma}_{(h,
	\tau_{Q}(v))} = D^{\Gamma}_{h} \cap T_{\tau_{Q}(v)}Q \subseteq
D^{\Gamma}_{h}.
\]
Thus, since $\tau_Q: TQ \to Q$ is an open map, it follows that $\tau_Q(D^{\Gamma}_{h})$ is an open subset 
of $Q$ and
\[
D^{\Gamma}_{h} = \bigcup_{q\in \tau_Q(D^{\Gamma}_{h})} D^{\Gamma}_{(h, q)}.
\]
In addition, we may define the smooth map $exp_{h}^{\Gamma}:
D^{\Gamma}_{h} \subseteq TQ \to Q \times Q$ as follows
\[
exp^{\Gamma}_{h}(v) = (\tau_{Q}(v), exp^{\Gamma}_{(h,
	\tau_{Q}(v))}(v)), \; \; \mbox{ for } v \in D^{\Gamma}_{h}.
\]
Moreover, we deduce that
\begin{lemma}\label{non-singular}
	Let $v$ be an element of $D^{\Gamma}_{h}$ such that
	$exp^{\Gamma}_{(h, \tau_{Q}(v))}$ is non-singular at $v$.
	Then, $exp^{\Gamma}_{h}$  is also non-singular at $v$.
\end{lemma}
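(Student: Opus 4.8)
The plan is to exploit the triangular structure of $exp^{\Gamma}_{h}$ relative to the fibration $\tau_{Q}: TQ \to Q$, and thereby make the non-singularity of the fibrewise exponential map propagate to the full map. Since $D^{\Gamma}_{h}$ is open in $TQ$, we have $\dim T_{v}(D^{\Gamma}_{h}) = \dim TQ = 2\dim Q = \dim(Q\times Q)$, so it suffices to prove that the tangent map $T_{v}exp^{\Gamma}_{h}$ is injective at the given $v$; being a linear map between vector spaces of equal dimension, injectivity is then equivalent to being an isomorphism, i.e.\ to non-singularity.

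First I would record $exp^{\Gamma}_{h}$ explicitly as the pair $exp^{\Gamma}_{h}(v) = (\tau_{Q}(v), E(v))$, where $E := (\tau_{Q}\circ\Phi^{\Gamma})(h,\cdot\,)$ is a genuinely smooth map on the open set $D^{\Gamma}_{h}$, smoothness being guaranteed by that of the flow $\Phi^{\Gamma}$. Composing with the two canonical projections $\mathrm{pr}_{1},\mathrm{pr}_{2}:Q\times Q\to Q$ gives $\mathrm{pr}_{1}\circ exp^{\Gamma}_{h} = \tau_{Q}$ and $\mathrm{pr}_{2}\circ exp^{\Gamma}_{h} = E$. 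Consequently a tangent vector $w\in T_{v}(D^{\Gamma}_{h})$ lies in $\ker T_{v}exp^{\Gamma}_{h}$ if and only if both $T_{v}\tau_{Q}(w)=0$ and $T_{v}E(w)=0$.

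The key observation is that the first equation forces $w$ to be $\tau_{Q}$-vertical, i.e.\ tangent to the fibre $T_{q_{0}}Q$ with $q_{0}=\tau_{Q}(v)$, and that along this fibre the map $E$ coincides with $exp^{\Gamma}_{(h,q_{0})}$, by the very definition in \eqref{Def-exp-h0-q0}. Hence, writing the inclusion of the fibre and restricting, the vertical derivative $T_{v}E(w)$ equals $T_{v}exp^{\Gamma}_{(h,q_{0})}(w)$, the differential of the fibrewise exponential map. The hypothesis that $exp^{\Gamma}_{(h,\tau_{Q}(v))}$ is non-singular at $v$ means precisely that this differential is injective (indeed an isomorphism of $n$-dimensional spaces), so $w=0$.

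This shows $\ker T_{v}exp^{\Gamma}_{h}=0$, and the dimension count stated above completes the argument. The only point requiring care — and the heart of the proof — is the identification of the vertical derivative of $E$ with the differential of the fibrewise map $exp^{\Gamma}_{(h,q_{0})}$; once this is established, no further analytic input about the flow is needed, and in particular one never has to control how $E$ varies in the transverse (base) directions.
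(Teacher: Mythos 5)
Your proof is correct and follows essentially the same route as the paper's: decompose $exp^{\Gamma}_{h}$ into its two components, note that a kernel vector of the differential must be $\tau_{Q}$-vertical, identify the vertical derivative of the second component with the differential of the fibrewise map $exp^{\Gamma}_{(h,\tau_{Q}(v))}$, and conclude injectivity (hence, by equality of dimensions, non-singularity). The only difference is that you make explicit the identification step and the dimension count, which the paper leaves implicit.
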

\begin{proof}
	We must prove that the map
	\[
	T_{v}(exp^{\Gamma}_{h}): T_v(D^{\Gamma}_{h}) \simeq
	T_v(TQ) \to T_{(\tau_{Q}(v), exp^{\Gamma}_{(h,
			\tau_{Q}(v))}(v))}(Q\times Q) \simeq T_{\tau_{Q}(v)}Q \times
	T_{exp^{\Gamma}_{(h,\tau_{Q}(v))}(v)}Q
	\]
	is a linear isomorphism.
	
	Suppose that
	\[
	0 = (T_{v}(exp_{h}^{\Gamma}))(X_{v}), \; \; \mbox{ with }
	X_{v} \in T_{v}(D^{\Gamma}_{h}).
	\]
	Then, we have that
	\[
	0 = (T_{v}\tau_{Q})(X_{v}) \; \; \mbox{ and } \; \; 0 =
	(T_{v}exp^{\Gamma}_{(h,\tau_{Q}(v))})(X_{v}).
	\]
	The first condition implies that
	\[
	X_{v} \in T_{v}(D^{\Gamma}_{h} \cap T_{\tau_{Q}(v)}Q) =
	T_{v}(D^{\Gamma}_{(h, \tau_{Q}(v))})
	\]
	and thus, using the second one, we conclude that
	\[
	X_{v} = 0.
	\]
\end{proof}
As we know, if $h > 0$ is sufficiently small and $q_{0} \in Q$
then the map $exp_{(h, q_{0})}^{\Gamma}: D^{\Gamma}_{(h,
	q_{0})} \to Q$ is non-singular at the point
$v_{(h, q_0)} \in D^{\Gamma}_{(h,
	q_{0})}$. Therefore, using Lemma \ref{non-singular}, we deduce the following result

\begin{theorem}\label{convexity2}
	Let $\Gamma$ be a \sode\ in $Q$ and $q_{0}$ be a point of $Q$. Then,
	one may find a sufficiently small positive number $h$, an open
	subset ${\mathcal U} \subseteq D^{\Gamma}_{h} \subseteq TQ$, with $v_{(h, q_0)}
	\in {\mathcal U}$, and an open subset $U$ of $Q$, with $q_{0} \in
	U$, such that:
	\begin{enumerate}
		\item
		The exponential map associated with $\Gamma$ at time $h$ 
		\[
		exp^{\Gamma}_{h}: {\mathcal U} \subseteq D^{\Gamma}_{h} \to U \times U \subseteq Q \times Q
		\]
		is a diffeomorphism.
		\item
		For every couple $(q, q') \in U \times U$ there
		exists a unique trajectory of $\Gamma$
		\[
		\sigma_{qq'h}: [0, h] \to Q
		\]
		satisfying
		\[
		\sigma_{qq'h}(0) = q, \; \; \sigma_{qq'h}(h) = q' \; \; \mbox{
			and } \; \; \dot{\sigma}_{qq'h}(0) \in {\mathcal U}.
		\]
	\end{enumerate}
\end{theorem}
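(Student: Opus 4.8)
The plan is to upgrade the pointwise non-singularity already available for the fibrewise exponential map into a genuine local diffeomorphism of the full map $exp^{\Gamma}_h$, and then to reorganize its image into a product neighbourhood. First I would fix $h > 0$ and $v_{(h, q_0)} \in D^{\Gamma}_{(h, q_0)}$ as in Theorem \ref{isomorfismo-exp-h0-q0}, which tells us that $exp^{\Gamma}_{(h, q_0)}(v_{(h, q_0)}) = q_0$ and that the fibrewise map $exp^{\Gamma}_{(h, q_0)}$ is non-singular at $v_{(h, q_0)}$. By Lemma \ref{non-singular}, this at once yields that the total map $exp^{\Gamma}_h \colon D^{\Gamma}_h \subseteq TQ \to Q \times Q$ is non-singular at $v_{(h, q_0)}$, i.e. $T_{v_{(h, q_0)}}(exp^{\Gamma}_h)$ is a linear isomorphism.

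Since $T_{v_{(h, q_0)}}(exp^{\Gamma}_h)$ is an isomorphism, the inverse function theorem provides an open neighbourhood $W$ of $v_{(h, q_0)}$ in $D^{\Gamma}_h$ on which $exp^{\Gamma}_h$ restricts to a diffeomorphism onto an open neighbourhood $W'$ of its image point in $Q \times Q$. Unwinding the definition $exp^{\Gamma}_h(v) = (\tau_Q(v), exp^{\Gamma}_{(h, \tau_Q(v))}(v))$ and using $exp^{\Gamma}_{(h, q_0)}(v_{(h, q_0)}) = q_0$ from Theorem \ref{isomorfismo-exp-h0-q0}, this image point is $(\tau_Q(v_{(h, q_0)}), exp^{\Gamma}_{(h, q_0)}(v_{(h, q_0)})) = (q_0, q_0)$, so $(q_0, q_0) \in W'$. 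The one step that is not purely automatic is arranging that the target is a genuine product $U \times U$ rather than the unstructured neighbourhood $W'$ of $(q_0, q_0)$. I would handle this by choosing an open set $U \subseteq Q$ with $q_0 \in U$ small enough that $U \times U \subseteq W'$, and then setting $\mathcal{U} := (exp^{\Gamma}_h|_W)^{-1}(U \times U)$. This $\mathcal{U}$ is open, is contained in $D^{\Gamma}_h$, contains $v_{(h, q_0)}$ (its image is $(q_0, q_0) \in U \times U$), and $exp^{\Gamma}_h \colon \mathcal{U} \to U \times U$ is a diffeomorphism, which is exactly assertion (i).

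For assertion (ii), given $(q, q') \in U \times U$ I would set $v := (exp^{\Gamma}_h|_{\mathcal{U}})^{-1}(q, q') \in \mathcal{U}$ and define $\sigma_{qq'h}(t) := (\tau_Q \circ \Phi^{\Gamma})(t, v)$ for $t \in [0, h]$, that is, the base projection of the integral curve of $\Gamma$ issuing from $v$. From the definition of $exp^{\Gamma}_h$ we read off $\tau_Q(v) = q$, whence $\sigma_{qq'h}(0) = \tau_Q(v) = q$, and $exp^{\Gamma}_{(h, q)}(v) = q'$, whence $\sigma_{qq'h}(h) = (\tau_Q \circ \Phi^{\Gamma})(h, v) = q'$; moreover the defining property of a \sode\ (its integral curves are tangent lifts) gives $\dot{\sigma}_{qq'h}(0) = \Phi^{\Gamma}(0, v) = v \in \mathcal{U}$. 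For uniqueness, any competing trajectory $\tilde{\sigma}$ with $\tilde{\sigma}(0) = q$, $\tilde{\sigma}(h) = q'$ and $\dot{\tilde{\sigma}}(0) \in \mathcal{U}$ has its initial velocity $\tilde{v} := \dot{\tilde{\sigma}}(0)$ mapped by $exp^{\Gamma}_h$ to $(\tau_Q(\tilde{v}), exp^{\Gamma}_{(h, q)}(\tilde{v})) = (q, q')$; injectivity of $exp^{\Gamma}_h$ on $\mathcal{U}$ then forces $\tilde{v} = v$, and uniqueness of integral curves of $\Gamma$ forces $\tilde{\sigma} = \sigma_{qq'h}$.

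I expect the main (though modest) obstacle to be the passage from the unstructured image neighbourhood $W'$ to the product form $U \times U$, together with the bookkeeping needed to keep $\mathcal{U}$ inside $D^{\Gamma}_h$; once that shrinking is performed, everything else reduces to a direct application of the inverse function theorem together with the already-established Theorem \ref{isomorfismo-exp-h0-q0} and Lemma \ref{non-singular}.
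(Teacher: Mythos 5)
Your proposal is correct and follows essentially the same route as the paper, which deduces Theorem \ref{convexity2} directly from Theorem \ref{isomorfismo-exp-h0-q0} and Lemma \ref{non-singular} via the inverse function theorem. The only extra content you supply is the (routine but worthwhile) shrinking of the image neighbourhood to a product $U\times U$ and the explicit derivation of part (ii) from injectivity, both of which the paper leaves implicit.
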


We will denote by 
$R^{e^-}_{h}: U \times U \to {\mathcal U}$ (respectively, $R^{e+}_{h}: U \times U \to {\mathcal U}$)
the inverse map of the diffeomorphism 
$exp^{\Gamma}_{h}: {\mathcal U}  \to U \times U $ (respectively, $exp^{\Gamma}_{h} \circ \Phi^{\Gamma}_{-h}: 
\Phi^{\Gamma}_{h}({\mathcal U}) \to U \times U$).

The maps 
\[ 
R^{e^-}_{h}: U \times U \subseteq Q \times Q \to {\mathcal U}\subseteq TQ \mbox{ and }
R^{e^+}_{h}: U \times U \subseteq Q \times Q \to \Phi^{\Gamma}_{h}({\mathcal U})\subseteq TQ
\]
are called the exact retraction maps associated with $\Gamma$. We have that
\[
R^{e^-}_{h}(q, q') = \dot{\sigma}_{qq'h}(0), \; \; R^{e^+}_{h}(q, q') = \dot{\sigma}_{qq'h}(h).
\]
Note that
\[
R^{e^+}_{h} = \Phi^{\Gamma}_{h} \circ R^{e^-}_{h},
\]
that is, the following diagram
\[
\xymatrix{ {\mathcal U}\subseteq TQ
	\ar[dd]_{\Phi_{h}^\Gamma} &   & &U\times U\subseteq Q\times Q \ar[lll]_{R^{e^-}_{h}}\ar[ddlll]^{{R^{e^+}_{h}}}\\
	&  & &\\
	\Phi_{h}^\Gamma( {\mathcal U})\subseteq TQ&  & & }
\]
is commutative.

%\input{Sodes}

%!TEX root =  Exact-discrete.tex

\let\s\alpha
\let\t\beta
\let\e\epsilon
\def\rightrightarrow{\rightrightarrows}
\def\T{\CMcal{T}}

%\begin{theorem}
%Let $\Gamma$ be a \sode\ on $Q$ and $q$ be a point of $Q$. There
%exists $h>0$, an open subset $\mathcal{U}\subset TQ$, with
%$\dot{\sigma}_{q_0q_0}(0) \in {\mathcal U}$, and an open
%neighborhood $U$ of $q$ in $Q$, such that for every pair $(q_0,
%q_1) \in U \times U$ there exists a unique trajectory
%$\map{\sigma_{01}}{[0, h]}{Q}$ of $\Gamma$ satisfying
%\[
%\sigma_{01}(0) = q_0, \quad \sigma_{01}(h) = q_1 \quad \text{ and
%}\quad \dot{\sigma}_{01}(0) \in \mathcal{U}.
%\]
%\end{theorem}

\begin{remark}\label{reweyls}
	{\rm 
		In some applications, it is useful to define the following map
		\begin{equation}\label{weyl}
		\widetilde{exp}^{\Gamma}_{h}(v)=(exp^{\Gamma}_{(-h/2, \tau_Q(v))}(v), exp^{\Gamma}_{(h/2, \tau_Q(v))}(v))
		\end{equation}
		which is a local diffeomorphism since
		\[
		\widetilde{exp}^{\Gamma}_{h}=exp_{h}^{\Gamma}\circ \Phi^{\Gamma}_{-h/2}\; .
		\]
	}
\end{remark}

\section{Convexity theorems for second order differential equations on Lie algebroids}\label{sec4}

In this section, we will obtain a version of Theorem \ref{convexity2} for a \sode\  on a general Lie algebroid
$A$.

First of all, we will recall the definition of a \sode\ on $A$ (see, for instance, \cite{LeMaMa}).

Let $\Gamma$ be a vector field on the tangent bundle $TQ$ of a manifold $Q$. Then, using (\ref{Loc-exp-Gamma}), it is easy to prove that $\Gamma$ is a \sode\ if and only if
\[
(T_v\tau_Q)(\Gamma(v)) = v, \; \; \mbox{ for } v \in TQ.
\]
This fact suggest us to introduce the definition of a \sode\ on the Lie algebroid $\tau: A \to Q$, with Lie algebroid structure $(\lcf \cdot, \cdot \rcf, \rho)$ (see Appendix \ref{algebroide-grupoide}), as follows. A vector field $\Gamma$ on $A$ is a \sode\ if
\[
(T_a\tau)(\Gamma(a)) = \rho(a), \; \; \mbox{ for } a\in A,
\]
and, in such a case, the trajectories of $\Gamma$ are the projections, via $\tau: A\to Q$, of the integral curves of $\Gamma$. In other words,
\begin{equation}\label{SODE-2}
\Gamma \mbox{ is a \sode\ on } A \Leftrightarrow \Gamma(f \circ \tau) = \widehat{d^{A}f}, \; \; \mbox{ for } f \in C^{\infty}(Q),
\end{equation}
where $\widehat{d^{A}f}$ is the fiberwise linear function on $A$ given by
\[
\langle \widehat{d^{A}(f)}, a \rangle = \rho(a)(f), \; \; \mbox{ for } a \in A.
\] 
Let $(q^{i})$ be local coordinates on an open subset $U \subseteq Q$ and $\{e_{\alpha}\}$ a local basis of $\Gamma(A)$, the space of sections of $A$, such that
\[
\rho(e_\alpha) = \rho^{i}_\alpha(q) \frac{\partial}{\partial q^{i}}.
\]
Denote by $(q^{i}, y^{\alpha})$ the corresponding local coordinates on $A$. Then, the local expression of a \sode\ $\Gamma$ is
\[
\Gamma(q, y) = \rho^{i}_\alpha(q) y^\alpha \frac{\partial}{\partial q^{i}} + \Gamma^{\alpha}(q, y) \frac{\partial}{\partial y^{\alpha}}.
\]
So, the integral curves of $\Gamma$ satisfy the following system of differential equations
\[
\displaystyle \frac{dq^{i}}{dt} = \rho^{i}_\alpha(q) y^{\alpha}, \; \; \frac{dy^{\alpha}}{dt} = \Gamma^{\alpha}(q, y).
\]
Thus, we have not an explicit system of second order differential equations as in the case of an standard \sode\ on $TQ$. However, we will prove some local convexity theorems for the \sode\ $\Gamma$ on $A$.  

In fact, in Section \ref{fibration}, we will discuss the particular case when $A$ is a special
integrable Lie algebroid: the vertical bundle associated with a fibration. In such a case, we will
prove a parametrized version of Theorem \ref{convexity2}. Next, in Section \ref{integrable}, we will consider
the more general case when $A$ is the Lie algebroid $AG$ associated with an arbitrary Lie
groupoid $G$. In fact, we will see that a \sode\ on $AG$ induces a \sode\ on the vertical
bundle of the source map of $G$ and, then, we will apply the results of the previous section. Finally, we will 
discuss the general case. For this purpose, we will use that for every Lie algebroid $A$ there exists a
(local) Lie groupoid whose Lie algebroid is $A$.

\subsection{The particular case of the vertical bundle of a fibration}\label{fibration}

Consider a surjective submersion $\map{\pi}{Q}{M}$ and the pair
groupoid $Q\times Q\rightrightarrow Q$ (see Appendix \ref{algebroide-grupoide}) . The subset $G\pi\subset Q\times Q$
given by
\[
G\pi=\set{(q_1,q_2)\in Q\times Q}{\pi(q_1)=\pi(q_2)}
\]
is a Lie subgroupoid of the pair groupoid. In consequence, the source and target
maps are $(q_1,q_2)\mapsto q_1$ and $(q_1,q_2)\mapsto q_2$,
respectively, the identity map is $q\mapsto (q,q)$ and the
multiplication is given by $(q_1,q_2)(q_2,q_3)=(q_1,q_3)$. The Lie
algebroid of $G\pi$ is the vector bundle whose fiber at a point
$q$ is
\[
A_q(G\pi)=\set{(0,v)\in T_qQ\times T_qQ}{T\pi(v)=0}
\]
with the anchor $(0,v)\mapsto v$, and hence it can be identified
with the vertical bundle $\map{\tau}{V\pi=\ker{T\pi}}{Q}$, with the
canonical inclusion as the anchor map and Lie bracket in the space 
of sections $\Gamma(\tau)$ the restriction to $\Gamma(V\pi)$ of 
the standard Lie bracket of vector fields.

On $V\pi$ we can take coordinates as follows. We consider local
coordinates $(q^i)=(q^a,q^\alpha)$ in $Q$ adapted to the submersion
$\pi$, that is, $\pi(q^a,q^\alpha)=(q^a)$. The coordinate vector
fields $\{e_\alpha=\partial/\partial q^\alpha\}$ are a basis of
local sections of $V\pi$, and hence we have coordinates
$(q^a,q^\alpha,y^\alpha)$ on $V\pi$, where $y^\alpha$ are the
components of a vector on such a coordinate basis. Thus, if $(\lcf \cdot, \cdot \rcf, \rho)$ is the Lie algebroid structure on $V\pi$, it follows that
\[
\rho (e_\alpha) = \frac{\partial}{\partial q^\alpha} \;\; \mbox{ and } \; \; \lcf e_\alpha, e_\beta \rcf = 0.
\]
This implies that the structure functions are
\[
\rho^a_\alpha=0\qquad
\rho^\beta_\alpha=\delta_\alpha^\beta\qquad\text{and}\qquad
C^\alpha_{\beta\gamma}=0.
\]

A \sode\ vector field on $V\pi$ is a vector field
$\Gamma\in\vectorfields{V\pi}$ such that
$T_a\tau(\Gamma(a))=\rho(a)$ for every $a\in V\pi$. In other words
$T_a\tau(\Gamma(a))$ is the vertical vector $a\in V\pi$ itself, and
hence, if $m=\pi(\tau(a))$ then the vector $\Gamma(a)$ is tangent
to $T(\pi^{-1}(m))$. It follows that a \sode\ on $V\pi$ is but a
parametrized version of an ordinary \sode, where the parameters
are the coordinates on the base manifold $M$. In other words,  it is
a smooth family of ordinary \sode s, one on each fiber of the
projection $\pi: Q \to M$.

This can also be easily seen in coordinates. Locally,  such a
\sode\ vector field is
\begin{align*}
\Gamma&=\rho^a_\alpha y^\alpha\pd{}{q^a}+\rho^\beta_\alpha y^\alpha\pd{}{q^\beta}+f^\alpha(q^b,q^\beta,y^\beta)\pd{}{y^\alpha}\\
&=y^\alpha\pd{}{q^\alpha}+f^\alpha(q^b,q^\beta,y^\beta)\pd{}{y^\alpha}
\end{align*}
for some local functions $f^\alpha\in\cinfty{V\pi}$. The integral
curves of $\Gamma$ are the solutions of the differential equations
%\begin{align*}
%\dot{x}^a&=0\\
%\dot{x}^\alpha&=y^\alpha\\
%\dot{y}^\alpha&=f^\alpha(x^b,x^\beta,y^\beta)\\
%\end{align*}
\[
\dot{q}^a=0,\qquad \dot{q}^\alpha=y^\alpha,\qquad
\dot{y}^\alpha=f^\alpha(q^b,q^\beta,y^\beta)
\]
or in other words
\begin{align*}
\dot{q}^a&=0\\
\ddot{q}^\alpha&=f^\alpha(q^b,q^\beta,\dot{q}^\beta).
\end{align*}
From this expression it is obvious that a \sode\ on $V\pi$ is
locally a parametrized version of an ordinary \sode, where the
parameters are the coordinates $(q^a)$ on the base manifold $M$.

On the other hand, $V\pi$ is a regular submanifold of $TQ$. In
fact, the canonical inclusion $i_{V\pi}: V\pi \to TQ$ is an
embedding. In addition, if $q_{0}$ is a point of $Q$ then it is
easy to prove that:
\begin{enumerate}
	\item
	There exists an open subset $W$ of $Q$, with $q_{0} \in W$, and
	there exists an standard local \sode\ $\bar{\Gamma}$ on $Q$, which
	is defined in $TW$, such that
	\[
	\bar{\Gamma}_{|TW\cap V\pi} = \Gamma_{|TW\cap V\pi}
	\]
	\item
	If $\bar{\sigma}: [0, h] \to W\subseteq Q$ is a trajectory of
	$\bar{\Gamma}$ and $\pi(\bar{\sigma}(0)) = \pi(\bar{\sigma}(h)) =
	m$ then $\bar{\sigma}([0, h]) \subseteq \pi^{-1}(m)$ and
	$\bar{\sigma}$ is a trajectory of the standard \sode\
	$\Gamma_{|T(\pi^{-1}(m))}$.
\end{enumerate}
Note that the local equations  defining  $V\pi$ as a
submanifold of $TQ$ are $y^a = 0$ and, thus, it is
sufficient to take
\[
\bar{\Gamma} = \displaystyle y^{a} \frac{\partial}{\partial
	q^a} + y^{\alpha} \frac{\partial}{\partial q^{\alpha}} +
f^{\alpha}(q^b, q^{\beta}, y^{\beta})\frac{\partial}{\partial
	y^{\alpha}}.
\]
Moreover, if $t\in [0,h]\rightarrow \bar{\sigma}(t)=(q^a(t), q^{\alpha}(t))\in Q$ is a trajectory of $\bar{\Gamma}$, 
$\pi (\bar{\sigma}(0))=\pi (\bar{\sigma}(h))=m$ and $m$ has local coordinates $(q^{a}_0)$ then, 
using that the trajectories of $\bar{\sigma}$ satisfy the local equations:
\[
\frac{d^2 q^a}{dt^2}=0, \qquad \frac{d^2 q^\alpha}{dt^2}=f^{\alpha}(q, y)
\]
and $q^a(0)=q^a(h)=q^a_0$, we deduce that
\[
q^a(t)=q^a_0,\ \forall t
\]
and
\[
\bar{\sigma}(t)=(q^a_0, q^{\alpha}(t))\in \pi^{-1}(m), \ \forall t
\]
Thus,
\[
\dot{\bar{\sigma}}(t)=(q^a_0, q^{\alpha}(t); 0, \frac{dq^{\alpha}}{dt}(t))
\]
is an integral curve of $\Gamma_{|T(\pi^{-1}(m))}$ and $\bar{\sigma}$ is a trajectory of $\Gamma_{|T(\pi^{-1}(m))}$.

Therefore, using Theorem \ref{convexity1} and the previous items 1 and 2,  one may find an open
neighborhood of $q_0$ in $Q$ and a unique curve
$\bar{\sigma}_{q_0q_0h} \equiv \sigma_{q_0q_0h}$ on it which
connects the point $q_0$ with itself and such that it is
trajectory of $\bar{\Gamma}$, for $h$ enough small. Then, from the second condition, it
follows that the curve $\sigma_{q_0q_0h}$ is contained in
$\pi^{-1}(\pi(q_{0}))$ and $v_{(h, q_0)} = \dot{\sigma}_{q_0q_0h}(0) \in
V_{q_0}(\pi)$. Moreover, if we apply Theorem \ref{convexity2} to
the standard \sode\ $\bar{\Gamma}$, we deduce the following result
\begin{theorem}
	\label{caso.Vpi} Let $\Gamma$ be a \sode\ on the Lie algebroid
	$V\pi\to Q$ and let $q_0$ be a point in $Q$. Then, there exists a
	sufficiently small positive number $h > 0$,  an open subset
	${\mathcal U} \subseteq V\pi$, with $v_{(h, q_{0})} \in {\mathcal U}$,
	and an open subset $U$ of $Q$, with $q_{0} \in U$, such that 
	\begin{enumerate}
		\item
		The exponential map of $\Gamma$ at time $h$
		\[
		exp^{\Gamma}_{h}: {\mathcal U} \to (U \times U)\cap G\pi, \; \;  v \in {\mathcal U} \to (\tau_{V\pi}(v), \tau_{V\pi}(\Phi^{\Gamma}_{h}(v))),
		\]
		is a diffeomorphism. Here, $\tau_{V\pi}: V\pi \to Q$ is the canonical projection and $\Phi^{\Gamma}$ is the flow of the vector field $\Gamma$ on $V\pi$. 
		\item
		For
		every couple $(q, q') \in (U \times U) \cap G{\pi}$ there exists
		a unique trajectory $\sigma_{qq'h}: [0, h] \to \pi^{-1}(\pi(q))$
		of the \sode\  $\Gamma_{|T(\pi^{-1}(\pi(q)))}$ which satisfies
		\[
		\sigma_{qq'h}(0) = q, \; \; \sigma_{qq'h}(h) = q' \; \; \mbox{
			and } \dot{\sigma}_{qq'h}(0) \in {\mathcal U}.
		\]
	\end{enumerate}
\end{theorem}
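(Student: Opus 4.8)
The plan is to reduce everything to the standard case, Theorem~\ref{convexity2}, applied to the extended standard \sode\ $\bar{\Gamma}$ on $TW$ constructed above, and then to check that the resulting objects restrict correctly to $V\pi$. The two facts that make this work are already essentially in place. From the local expression of $\bar{\Gamma}$ one reads $\dot{y}^a=0$ and $\dot{q}^a=y^a$, so the submanifold $V\pi$, given locally by $y^a=0$, is invariant under the flow of $\bar{\Gamma}$; since moreover $\bar{\Gamma}$ and $\Gamma$ agree on $TW\cap V\pi$, for every $v\in V\pi$ in the relevant domain one has $\Phi^{\bar{\Gamma}}_{h}(v)=\Phi^{\Gamma}_{h}(v)$, and because $\tau_Q$ restricts to $\tau_{V\pi}$ on $V\pi$ this yields $exp^{\bar{\Gamma}}_{h}(v)=exp^{\Gamma}_{h}(v)$. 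By property (ii) above, whenever such a $v$ lies in the domain the points $\tau_{V\pi}(v)$ and $\tau_{V\pi}(\Phi^{\Gamma}_{h}(v))$ belong to the same fibre of $\pi$, so in fact $exp^{\Gamma}_{h}(v)\in G\pi$.

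I would then apply Theorem~\ref{convexity2} to $\bar{\Gamma}$ at $q_0$, obtaining a small $h>0$, an open set $\bar{\mathcal U}\subseteq D^{\bar{\Gamma}}_{h}\subseteq TW$ with $v_{(h,q_0)}\in\bar{\mathcal U}$, and an open set $U\subseteq W$ with $q_0\in U$, such that $exp^{\bar{\Gamma}}_{h}\colon\bar{\mathcal U}\to U\times U$ is a diffeomorphism. Setting ${\mathcal U}=\bar{\mathcal U}\cap V\pi$ gives an open subset of $V\pi$ which contains $v_{(h,q_0)}$ (recall that $v_{(h,q_0)}\in V_{q_0}(\pi)$ was already established) and which, by the invariance of $V\pi$ under the flow, is contained in $D^{\Gamma}_{h}$.

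The crux is the preimage identity ${\mathcal U}=(exp^{\bar{\Gamma}}_{h})^{-1}\big((U\times U)\cap G\pi\big)$. The inclusion $\subseteq$ is exactly the content of the first paragraph. For $\supseteq$, let $v\in\bar{\mathcal U}$ satisfy $exp^{\bar{\Gamma}}_{h}(v)=(q,q')\in(U\times U)\cap G\pi$; then the trajectory $\bar{\sigma}$ of $\bar{\Gamma}$ with $\dot{\bar{\sigma}}(0)=v$ joins $q$ to $q'$ in time $h$ and satisfies $\pi(q)=\pi(q')$, so by property (ii) it is contained in $\pi^{-1}(\pi(q))$ and $v=\dot{\bar{\sigma}}(0)\in V\pi$, i.e.\ $v\in{\mathcal U}$. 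Since $exp^{\bar{\Gamma}}_{h}$ is a diffeomorphism and ${\mathcal U}$ is open in the embedded submanifold $V\pi$, the restriction $exp^{\bar{\Gamma}}_{h}|_{\mathcal U}$ is a diffeomorphism onto its image, which by the identity above is precisely $(U\times U)\cap G\pi$; and by the first paragraph this restriction coincides with $exp^{\Gamma}_{h}\colon{\mathcal U}\to(U\times U)\cap G\pi$. This proves~(1).

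For~(2), given $(q,q')\in(U\times U)\cap G\pi$ I would put $v=(exp^{\Gamma}_{h})^{-1}(q,q')\in{\mathcal U}$ and take $\sigma_{qq'h}$ to be the projection to $Q$ of the integral curve of $\Gamma$ through $v$; by construction $\sigma_{qq'h}(0)=q$, $\sigma_{qq'h}(h)=q'$ and $\dot{\sigma}_{qq'h}(0)=v\in{\mathcal U}$, while property (ii) forces $\sigma_{qq'h}$ to stay in $\pi^{-1}(\pi(q))$ and to be a trajectory of $\Gamma_{|T(\pi^{-1}(\pi(q)))}$, and uniqueness follows from the injectivity of $exp^{\Gamma}_{h}$ on ${\mathcal U}$. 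I expect the main obstacle to be the preimage identity: the point is to invoke property (ii) to guarantee that any trajectory of the extension $\bar{\Gamma}$ connecting two points of a common $\pi$-fibre must be vertical, so that the image of ${\mathcal U}$ is exactly $(U\times U)\cap G\pi$ and not merely contained in $U\times U$; without this one would overcount non-vertical initial velocities and lose both the surjectivity onto $G\pi$ and the uniqueness in~(2).
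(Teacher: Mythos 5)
Your proposal is correct and follows essentially the same route as the paper: extend $\Gamma$ to the local standard \sode\ $\bar{\Gamma}$ on $TW$, apply Theorems \ref{convexity1} and \ref{convexity2} to $\bar{\Gamma}$, and use the fact that any trajectory of $\bar{\Gamma}$ joining two points of a common $\pi$-fibre must lie in that fibre to restrict everything to $V\pi$ and $G\pi$. In fact you spell out the preimage identity ${\mathcal U}=(exp^{\bar{\Gamma}}_{h})^{-1}\big((U\times U)\cap G\pi\big)$ and the restriction-of-a-diffeomorphism argument in more detail than the paper, which simply deduces the theorem from the two preparatory properties.
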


As in the standard case, we will denote by
\[
R^{e^-}_{h}: (U \times U)\cap G\pi \subseteq Q \times Q \to {\mathcal U}\subseteq V\pi  \mbox{ and }
R^{e^+}_{h}: (U \times U)\cap G\pi \subseteq Q \times Q \to \Phi^{\Gamma}_{h}({\mathcal U})\subseteq V\pi
\]
the exact retraction maps associated with $\Gamma$. In other words,
\[
R^{e^-}_{h} = (exp^{\Gamma}_{h})^{-1} \; \; \mbox{ and }  \; \; R^{e^+}_{h} = \Phi_{h}^{\Gamma} \circ R^{e-}_{h}.
\]

\subsection{The case of the Lie algebroid of a Lie groupoid}\label{integrable}

We will consider a Lie groupoid 
$G\rightrightarrow Q$ with source map
$\alpha$, target map $\beta$, and consider the fibration
$\pi\equiv\alpha$ and the associated Lie algebroid $V\alpha$ as
above. Let $\map{\tau}{AG}{Q}$ be the Lie algebroid of $G$, and denote
by $\rho$ its anchor (see Appendix \ref{algebroide-grupoide}).
Denote by $\Psi$ the vector bundle map
$\map{\Psi}{V\alpha}{AG}$ given by $\Psi(v_g)=Tl_{g^{-1}}v_g$,
for every $v_g\in V\alpha$, where $l_{g^{-1}}: \alpha^{-1}(\alpha(g)) \to  \alpha^{-1}(\beta(g))$ is the left-translation by $g^{-1} \in G$. This map is well defined since
$T\alpha(Tl_{g^{-1}}v_g)=0$ and hence
$Tl_{g^{-1}}v_g$ is $\alpha$-vertical at the identity in
$\beta(g)$. The following commutative diagram illustrates the situation:
\[
\xymatrix{ V\alpha \ar[rr]^{\Psi}
	\ar[dd]_{\tilde{\tau}}&    &AG \ar[dd]^{\tau}\\
	&  & \\
	G \ar[rr]^{\beta}&  & Q }
\]

Moreover, $\Psi: V\alpha \to AG$ is a Lie algebroid morphism over $\beta: G \to Q$. This follows using that
if $X \in \Gamma(AG)$ and $\lvec{X}$ is the corresponding left invariant vector field on  
$G$, then $\lvec{X}$ is a section of the vector bundle $\tilde{\tau}: V\alpha \to G$ and, in fact, the space of sections of this vector bundle is locally generated by the left-invariant vector fields on $G$. In addition:
\begin{enumerate}
	\item
	$\lvec{X}$ and $X$ are $\Psi$-related;
	\item
	If $\lcf \cdot, \cdot \rcf $ is the Lie bracket in $\Gamma(AG)$
	\[
	\lvec{\lcf X, Y \rcf} = [\lvec{X}, \lvec{Y}], \; \; \; \mbox{ for } X, Y \in \Gamma(AG)
	\]
	and
	\item
	The vector field $\lvec{X}$ on $G$ is $\beta$-projectable over $\rho(X)$, where $\rho$ is the anchor map
	of the Lie algebroid $AG$.
\end{enumerate}
(see Appendix \ref{algebroide-grupoide}).

On the other hand, given a \sode\ $\Gamma$ in $AG$ there exists a unique \sode\
$\tilde{\Gamma}$ in $V\alpha$ which is $\Psi$ related to $\Gamma$,
that is $T\Psi\circ\tilde{\Gamma}=\Gamma\circ\Psi$. Indeed, this
is a special case of the following result, by taking into account
that $\Psi$ is a fiberwise bijective morphism of Lie algebroids.

\begin{proposition}\label{rel-SODES}
	Let $\map{\tau_1}{E_1}{Q_1}$ and $\map{\tau_2}{E_2}{Q_2}$ be Lie
	algebroids and let $\map{\Psi}{E_1}{E_2}$ be a morphism of Lie
	algebroids which is fiberwise bijective. Given a \sode\ vector
	field $\Gamma_2$ on the Lie algebroid $E_2$ there exists a unique
	\sode\ vector field $\Gamma_1$ on the Lie algebroid $E_1$ such
	that $T\Psi\circ\Gamma_1=\Gamma_2\circ\Psi$.
\end{proposition}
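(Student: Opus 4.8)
The plan is to build $\Gamma_1$ pointwise and then check that the resulting assignment is smooth. Write $\psi\colon Q_1\to Q_2$ for the base map of $\Psi$, let $\rho_1,\rho_2$ be the two anchors, and recall that, $\Psi$ being a morphism of Lie algebroids, its anchors are compatible, $\rho_2\circ\Psi=T\psi\circ\rho_1$ (the bracket condition will play no role). Fix $a\in E_1$ and set $q=\tau_1(a)$. I seek a vector $\Gamma_1(a)\in T_aE_1$ satisfying (i) $T_a\Psi(\Gamma_1(a))=\Gamma_2(\Psi(a))$ and (ii) $T_a\tau_1(\Gamma_1(a))=\rho_1(a)$, the first being $\Psi$-relatedness and the second the \sode\ condition on $E_1$. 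Throughout I would use the canonical identification $V_aE_1\cong (E_1)_q$ of the $\tau_1$-vertical subspace with the fibre, under which the restriction of $T_a\Psi$ to vertical vectors becomes the fibre map $\Psi_q\colon (E_1)_q\to (E_2)_{\psi(q)}$, a linear isomorphism by fibrewise bijectivity.

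For uniqueness, I would observe that the difference $W\in T_aE_1$ of two solutions satisfies $T_a\tau_1(W)=0$ and $T_a\Psi(W)=0$. The first forces $W$ to be $\tau_1$-vertical, and on vertical vectors $T_a\Psi$ is the isomorphism $\Psi_q$, so $T_a\Psi(W)=0$ gives $W=0$.

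For existence, first choose any $\tilde Z\in T_aE_1$ with $T_a\tau_1(\tilde Z)=\rho_1(a)$, which exists because $\tau_1$ is a submersion. I would then measure the failure of (i) by $D:=T_a\Psi(\tilde Z)-\Gamma_2(\Psi(a))\in T_{\Psi(a)}E_2$ and show it is $\tau_2$-vertical. Applying $T_{\Psi(a)}\tau_2$ and using $\tau_2\circ\Psi=\psi\circ\tau_1$ gives $T_{\Psi(a)}\tau_2\bigl(T_a\Psi(\tilde Z)\bigr)=T\psi(\rho_1(a))$, while the \sode\ property of $\Gamma_2$ gives $T_{\Psi(a)}\tau_2\bigl(\Gamma_2(\Psi(a))\bigr)=\rho_2(\Psi(a))$; these two coincide exactly by anchor compatibility, so $D$ is vertical. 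Using fibrewise bijectivity once more, there is a unique vertical $W\in V_aE_1$ with $T_a\Psi(W)=-D$, and $\Gamma_1(a):=\tilde Z+W$ then satisfies both (i) and (ii).

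It remains to check smoothness and independence of the auxiliary $\tilde Z$: uniqueness already shows $\Gamma_1(a)$ is independent of $\tilde Z$, and smoothness can be read off in coordinates, where solving the $\partial/\partial y^\alpha$-component of (i) amounts to inverting the smooth invertible fibre matrix of $\Psi$ (equivalently, $T_a\Psi$ restricted to verticals has smooth inverse $\Psi_q^{-1}$). I expect the one genuinely delicate point to be the verticality of $D$: this is precisely where the morphism hypothesis (through anchor compatibility) and the \sode\ property of $\Gamma_2$ are needed together, and it is exactly what allows the correction of $\tilde Z$ by a vertical vector, after which fibrewise bijectivity does the rest.
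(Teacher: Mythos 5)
Your proof is correct and follows essentially the same route as the paper's: uniqueness via verticality plus fibrewise bijectivity on the fibres, and existence by correcting a lift of the anchor by a uniquely determined vertical vector. The only cosmetic difference is that the paper takes a globally defined auxiliary \sode\ on $E_1$ as the lift (which makes smoothness automatic), whereas you use a pointwise lift and then check smoothness in coordinates; you also spell out the verticality of the defect $D$ via anchor compatibility, a step the paper leaves implicit.
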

\begin{proof}
	We have to show that for every $a_1\in E_1$, there exists a unique  
	$v_1\in T_{a_1}E_1$ satisfying the equations
	\[
	T\Psi(v_1)=\Gamma_2(\Psi(a_1)),\qquad\text{and}\qquad
	T\tau_1(v_1)=\rho_1(a_1).
	\]
	Note that if $v_{1}, v_{1}' \in T_{a_{1}}E_{1}$ satisfy these
	conditions then $v_{1}' - v_{1} \in Ker (T\tau_{1})$ and, since
	$\Psi$ is fiberwise bijective and $(T\Psi)(v_1) =
	(T\Psi)(v_{1}')$, we conclude that $v_{1}' = v_{1}$.
	
	Next, we will see that one may find a vector $v_{1} \in
	T_{a_{1}}E_{1}$ which satisfies the above equations.
	
	For that consider a fixed (but
	arbitrary) auxiliary \sode\ vector field
	$\Gamma\in\vectorfields{E_1}$. Since $\Gamma(a_1)$ projects to
	$\rho_1(a_1)$, the vector $v_1$ satisfies the second equation if
	and only if the vector $w_1=v_1-\Gamma(a_1)$ is vertical. If $\xi^V: E_1\times E_1\rightarrow V\tau_1$ is the canonical vertical lift, it follows that  we can write  $\xi^V(a_1,c_1)=w_1=v_1-\Gamma(a_1)$ for
	a unique $c_1\in E_1$, and then, using that $\Psi$ is a morphism
	of Lie algebroids, the first equation reads
	\[
	\xi^V(\Psi(a_1),\Psi(c_1))=\Gamma_2(\Psi(a_1))-T\Psi(\Gamma(a_1)).
	\]
	The right hand side of this equation is vertical at the point
	$\Psi(a_1)$, and since $\Psi$ is fiberwise bijective it has a
	unique solution $c_1$. Thus, the vector
	$v_1=\Gamma(a_1)+\xi^V(a_1,c_1)$ is the solution for our
	equations.
\end{proof}

Let $\Gamma$ be a \sode\ in the Lie algebroid $AG$ of a Lie groupoid $G$. Denote by $\tilde{\Gamma}$ the unique \sode\ in $V\alpha$ which is $\Psi$-related with $\Gamma$. Note that the natural inclusion 
\[
\iota: AG \to V\alpha
\]
is a Lie algebroid morphism over the identity map $\epsilon: Q \to G$. So, $AG$ is a Lie subalgebroid of the Lie algebroid $V\alpha$. However, the restriction of a \sode\ on $V\alpha$ to $AG$ is not, in general, tangent to $AG$.
\begin{example}{\rm
		Suppose that $G$ is a Lie group. Then, the Lie algebra $\frak{g}$ of $G$ is the Lie algebroid of the Lie groupoid $G$ (see Appendix \ref{algebroide-grupoide}). Moreover, since the source map $\alpha: G \to \{\frak{e}\}$ is the trivial map (with $\frak{e}$ the identity element in $G$), we have that $V\alpha$ is just the tangent bundle $TG$ of $G$ and the Lie algebroid $V\alpha \to G$ is the standard Lie algebroid $\tau_G: TG \to G$. In addition, using the left trivialisation of $TG$
		\[
		L: TG \to G \times \frak{g}, \; \; v_g \in T_gG \to (g, \Psi(v_g)),
		\]
		we can identify $TG$ with the product $G \times \frak{g}$. Under this identification, the natural inclusion $\iota: \frak{g} \to TG$ is given by
		\[
		\iota: \frak{g} \to G \times \frak{g}, \; \; \xi \to \iota(\xi) = (\frak{e}, \xi).
		\]
		Moreover, using the same identification, we have that
		\[
		TTG \simeq T(G \times \frak{g}) \simeq (G \times \frak{g}) \times (\frak{g} \times \frak{g})
		\]
		and an arbitrary \sode\ $\tilde{\Gamma}$ on $\tau_G: TG \simeq G \times \frak{g} \to G$,
		\[
		\tilde{\Gamma}: TG \simeq G \times \frak{g} \to TTG \simeq (G \times \frak{g}) \times (\frak{g} \times \frak{g}),
		\]
		has the following expression
		\[
		\tilde{\Gamma}(g, \xi) = (g, \xi; \xi, \eta(g, \xi)) \in (G \times \frak{g}) \times (\frak{g} \times \frak{g}), \; \; \mbox{ for } (g, \xi) \in G \times \frak{g}.
		\]
		On the other hand, an arbitrary vector field $\Gamma$ on ${\mathfrak g}$
		\[
		\xi\in {\mathfrak g}\longrightarrow \Gamma(\xi)=(\xi, \eta(\xi))\in {\mathfrak g}\times {\mathfrak g}
		\]
		is a \sode\ and it is clear that the corresponding \sode\ $\tilde{\Gamma}$ on $G$ is given by
		\[
		\tilde{\Gamma} (g, \xi)=(g, \xi; \xi, \eta(\xi)), \hbox{ for } (g, \xi)\in {\mathfrak g}\times {\mathfrak g}
		\]
		Thus, it is clear that the restriction of $\tilde{\Gamma}$ to the Lie subalgebroid $\frak{g} \simeq \{\frak{e}\} \times \frak{g} \subseteq G \times \frak{g} \simeq TG$ is not tangent to $\frak{g}$.
	}
\end{example}
Now, we will apply Theorem \ref{caso.Vpi} to the \sode\ $\tilde{\Gamma}$ and the point $\epsilon(q_0) \in G$, with $q_0 \in Q$. Then, one 
may find an open neighborhood of $\varepsilon(q_0)$
in $G$ and a unique curve
$\tilde{\sigma}_{\varepsilon(q_{0})\varepsilon(q_{0})h}$ on it which
connects the point $\varepsilon(q_{0})$ with itself at time $h > 0$ and such that it
is a trajectory of $\tilde{\Gamma}_{|T(\alpha^{-1}(q_0))}$. In fact,
the curve $\tilde{\sigma}_{\varepsilon(q_{0})\varepsilon(q_{0})h}$ is
contained in $\alpha^{-1}(q_{0})$ and, therefore, $v_{(h, q_{0})} =
\dot{\tilde{\sigma}}_{\varepsilon(q_{0})\varepsilon(q_{0})h}(0) \in
V_{\varepsilon(q_{0})}\alpha = A_{q_{0}}G$.

Moreover, we may prove the following result

\begin{theorem}\label{convexity-definitivo}
	Let $\Gamma$ be a \sode\ vector field on the Lie algebroid $AG\to
	Q$ of the Lie groupoid $G\rightrightarrow Q$,  $q_0\in Q$  a point in the
	base manifold and $\tilde{\Gamma}$ the corresponding \sode\ in the Lie algebroid $V\alpha \to G$. 
	Then, there exists a sufficiently small positive
	number $h > 0$, an open subset ${\mathcal U}$ in $AG$, with
	$v_{(h, q_0)} \in {\mathcal U}$, and an open subset $U$ of $G$, with
	$\varepsilon(q_{0}) \in U$, such that:
	\begin{enumerate}
		\item
		The exponential map associated with $\Gamma$ at time $h$
		\[
		exp^{\Gamma}_h: {\mathcal U} \to U, \; \; v \in {\mathcal U} \to \tilde{\tau}(\Phi^{\tilde{\Gamma}}_h(v)) \in U
		\]
		is a diffeomorphism. Here $\tilde{\tau}: V\alpha \to G$ is the canonical projection and $\Phi^{\tilde{\Gamma}}$ is 
		the flow of the vector field $\tilde{\Gamma}$ on $V\alpha$. 
		\item
		For every $g\in U$ there exists
		a unique trajectory $\sigma_{\varepsilon(\alpha(g))gh}: [0, h] \to
		\alpha^{-1}(\alpha(g))$ of $\tilde{\Gamma}$ satisfying the following conditions
		\[
		\sigma_{\varepsilon(\alpha(g))gh}(0) = \varepsilon(\alpha(g)), \; \;
		\sigma_{\varepsilon(\alpha(g))gh}(h) = g, \; \;
		\dot{\sigma}_{\varepsilon(\alpha(g))gh}(0) \in {\mathcal U}.
		\]
		Thus, the induced curve $a_{\varepsilon(\alpha(g))gh} = \Psi \circ
		\dot{\sigma}_{\varepsilon(\alpha(g))gh}$ in $AG$ is an integral curve of
		the \sode\ $\Gamma$. Moreover, the trajectory of $\Gamma$
		\[
		q_{\alpha(g)\beta(g)h} = \tau \circ a_{\epsilon(\alpha(g))gh}: [0, h] \to Q
		\]
		has initial point $\alpha(g)$ and final point $\beta(g)$, that is,
		\[
		q_{\alpha(g)\beta(g)h}(0) = \alpha(g), \; \; q_{\alpha(g)\beta(g)h}(h) = \beta(g).
		\]
	\end{enumerate}
\end{theorem}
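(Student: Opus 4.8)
The plan is to transport everything to $V\alpha$ via the fiberwise-bijective Lie algebroid morphism $\Psi: V\alpha \to AG$, apply the already-proven Theorem \ref{caso.Vpi} to the fibration $\pi \equiv \alpha: G \to Q$ and the $\Psi$-related \sode\ $\tilde\Gamma$, and then push the conclusions back to $AG$. The whole difficulty is organizational: Theorem \ref{caso.Vpi} already delivers a local diffeomorphism and a unique connecting trajectory inside the fiber $\alpha^{-1}(q_0)$; what remains is to re-express its output in terms of the groupoid data ($\alpha$, $\beta$, $\Psi$, $\tau$) and to verify that the induced curve in $AG$ really is an integral curve of $\Gamma$.

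First I would apply Theorem \ref{caso.Vpi} verbatim, with the submersion $\pi = \alpha$ and the base point $\varepsilon(q_0) \in G$ playing the role of $q_0 \in Q$. This yields $h > 0$, an open set $\tilde{\mathcal U} \subseteq V\alpha$ with $v_{(h,q_0)} \in \tilde{\mathcal U}$, and an open set $\tilde U \subseteq G$ with $\varepsilon(q_0) \in \tilde U$, such that $exp^{\tilde\Gamma}_h : \tilde{\mathcal U} \to (\tilde U \times \tilde U)\cap G\alpha$ is a diffeomorphism and the fiberwise unique connecting trajectories exist. The key reduction is to observe that because the source $\alpha$ is constant along any trajectory of $\tilde\Gamma_{|T(\alpha^{-1}(q_0))}$, a trajectory starting at $\varepsilon(q_0)$ stays in $\alpha^{-1}(q_0)$, so its source component is pinned to $\varepsilon(q_0)$. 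Consequently the ``first factor'' $\tilde\tau_{V\alpha}$ in $exp^{\tilde\Gamma}_h$ contributes no information beyond fixing the source fiber, and the genuine diffeomorphism is carried by the ``second factor'' $v \mapsto \tilde\tau(\Phi^{\tilde\Gamma}_h(v))$. I would set $U := \tilde U$ and transport $\tilde{\mathcal U}$ to ${\mathcal U} := \Psi(\tilde{\mathcal U}) \subseteq AG$; since $\Psi$ is fiberwise bijective and a diffeomorphism on each source fiber restricted appropriately, this identifies ${\mathcal U}$ with the relevant piece of $AG$ and sends $v_{(h,q_0)}$ to the prescribed point, giving part (i).

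For part (ii), I would take $g \in U$ and use Theorem \ref{caso.Vpi}(ii) with source fiber $\alpha^{-1}(\alpha(g))$ to produce the unique trajectory $\sigma_{\varepsilon(\alpha(g))gh}: [0,h] \to \alpha^{-1}(\alpha(g))$ of $\tilde\Gamma$ connecting $\varepsilon(\alpha(g))$ to $g$ with $\dot\sigma(0) \in {\mathcal U}$. The substantive verification is that $a := \Psi \circ \dot\sigma$ is an integral curve of $\Gamma$ in $AG$: this follows because $\dot\sigma$ is an integral curve of $\tilde\Gamma$ and $T\Psi \circ \tilde\Gamma = \Gamma \circ \Psi$ by $\Psi$-relatedness (Proposition \ref{rel-SODES}), so $\tfrac{d}{dt}(\Psi\circ\dot\sigma) = T\Psi(\ddot\sigma) = T\Psi(\tilde\Gamma(\dot\sigma)) = \Gamma(\Psi(\dot\sigma)) = \Gamma(a)$. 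Finally, the projected curve $q := \tau \circ a$ is a trajectory of $\Gamma$ by definition, and its endpoints are computed from the groupoid structure: since $\tau \circ \Psi = \beta \circ \tilde\tau$ (the commutative square in the excerpt), $q(t) = \beta(\sigma(t))$, hence $q(0) = \beta(\varepsilon(\alpha(g))) = \alpha(g)$ and $q(h) = \beta(g)$, using $\beta \circ \varepsilon = \id$.

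**The main obstacle** I expect is not any single hard estimate — all the analytic content is already packaged in Theorems \ref{convexity1}, \ref{isomorfismo-exp-h0-q0}, and \ref{caso.Vpi} — but rather the bookkeeping needed to pass cleanly between the $V\alpha$-picture and the $AG$-picture. Specifically, one must be careful that the exponential map stated as $v \in {\mathcal U} \mapsto \tilde\tau(\Phi^{\tilde\Gamma}_h(v))$ in the theorem is literally the second component of $exp^{\tilde\Gamma}_h$ reinterpreted through the identification ${\mathcal U} \cong \tilde{\mathcal U}$ via $\Psi$, and that being a diffeomorphism onto $U$ (rather than onto $(U\times U)\cap G\alpha$) is exactly the statement of Theorem \ref{caso.Vpi}(i) after discarding the redundant constant first factor. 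Making this identification precise, together with checking that $\Psi$ restricts to the needed diffeomorphism between the source fibers of $V\alpha$ and $AG$, is where all the care lies.
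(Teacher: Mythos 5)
Your overall strategy coincides with the paper's: apply Theorem \ref{caso.Vpi} to $\pi=\alpha$ at the point $\varepsilon(q_0)$, and then transfer the conclusions to $AG$ using the $\Psi$-relatedness of $\tilde\Gamma$ and $\Gamma$ together with $\tau\circ\Psi=\beta\circ\tilde\tau$. Your treatment of part (ii) -- the verification that $a=\Psi\circ\dot\sigma$ is an integral curve of $\Gamma$ and the endpoint computation via $\beta\circ\varepsilon=\mathrm{id}$ -- is exactly the paper's. However, there are genuine gaps in your part (i).

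First, the set ${\mathcal U}$ must \emph{not} be taken as $\Psi(\tilde{\mathcal U})$. The morphism $\Psi\colon V\alpha\to AG$ is fiberwise bijective but lives over the submersion $\beta$, so it is far from injective on the open set $\tilde{\mathcal U}\subseteq V\alpha$; it does not ``identify'' $\tilde{\mathcal U}$ with a piece of $AG$. Moreover, the exponential map in the statement is $v\mapsto\tilde\tau(\Phi^{\tilde\Gamma}_h(v))$ for $v\in{\mathcal U}\subseteq AG$, which regards $v$ as an element of $V\alpha$ through the \emph{canonical inclusion} $\iota\colon AG\hookrightarrow V\alpha$ (recall $A_qG=V_{\varepsilon(q)}\alpha$), not through any inverse of $\Psi$; for a general point of $\Psi(\tilde{\mathcal U})$ the flow at time $h$ need not even be defined. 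The paper works instead with $\tilde{\mathcal U}\cap AG$ and $exp^\Gamma_h=pr_2\circ exp^{\tilde\Gamma}_h\circ\iota$. Second, the claim that ``the genuine diffeomorphism is carried by the second factor'' needs a proof: on $\tilde{\mathcal U}\cap AG$ the first factor $\tilde\tau(\iota(v))=\varepsilon(\tau(v))$ is not constant, and what one must show is that $0=T_v(pr_2\circ exp^{\tilde\Gamma}_h\circ\iota)(X_v)$ forces $T_v\tau(X_v)=0$ (using $\alpha\circ\tilde\tau\circ\Phi^{\tilde\Gamma}_h\circ\iota=\tau$, i.e.\ that trajectories stay in their source fiber), hence annihilates the first factor as well, hence $X_v=0$ because $exp^{\tilde\Gamma}_h$ is a diffeomorphism. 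Your remark that the first factor is determined by the second is the right intuition, but it is not turned into this infinitesimal argument, which is the actual content of part (i) beyond Theorem \ref{caso.Vpi}. Finally, setting $U:=\tilde U$ does not suffice for part (ii) as stated ``for every $g\in U$'': one needs $\varepsilon(\alpha(g))$ to lie in the neighbourhood where Theorem \ref{caso.Vpi}(ii) applies, which forces the additional shrinking $U=U'\cap\alpha^{-1}(W)$ with $\varepsilon(W)\subseteq U'$, ensuring $\varepsilon(\alpha(U))\subseteq U$; this step is absent from your proposal, as is the final choice ${\mathcal U}=(exp^\Gamma_h)^{-1}(U)$ that makes $exp^\Gamma_h\colon{\mathcal U}\to U$ onto.
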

\begin{proof}
	Using Theorem \ref{caso.Vpi}, we deduce that there exists a sufficiently small positive number $h >0$, an open subset
	$\tilde{\mathcal U}$ in $V\alpha$, with $v_{(h, q_0)} \in \tilde{\mathcal U}$, and an open subset $V$ of $G$, 
	with $\varepsilon(q_0) \in V$, such that:
	\begin{enumerate}
		\item
		The exponential map of $\tilde{\Gamma}$ at $h$
		\[
		exp_{h}^{\tilde{\Gamma}}: \tilde{\mathcal U} \subseteq V\alpha \to (V \times V) \cap G\alpha \subseteq G \times G, \; \; \tilde{v} \to (\tilde{\tau}(\tilde{v}), \tilde{\tau}(\Phi^{\tilde{\Gamma}}_{h}(\tilde{v}))),
		\]
		is a diffeomorphism.
		\item
		For every $g, g' \in (V \times V)\cap G\alpha$, there exists a unique trajectory $\sigma_{gg'h}: [0, h] \to \alpha^{-1}(\alpha(g))$ of $\tilde{\Gamma}$ satisfying the following conditions
		\[
		\sigma_{gg'h}(0) = g, \; \; \; \sigma_{gg'h}(h) = g', \; \; \; \dot{\sigma}_{gg'h}(0) \in \tilde{\mathcal U}.
		\]
	\end{enumerate}
	Now, we take the open subset $\tilde{\mathcal U} \cap AG$ of $AG$. It is clear that $v_{(h, q_0)} \in \tilde{\mathcal U}\cap AG$. 
	
	Denote by $\iota: AG \to V\alpha$ the canonical inclusion. Then, the exponential map $exp_h^{\Gamma}: \tilde{\mathcal U}\cap AG \subseteq AG \to G$ is given by
	\[
	exp^{\Gamma}_h = pr_2 \circ exp_{h}^{\tilde{\Gamma}} \circ \iota,
	\]
	where $pr_2: (V\times V)\cap G\alpha \to V\subseteq G$ is the canonical projection on the second factor. In fact,
	\begin{equation}\label{expresion-expo-tilde}
	exp_{h}^{\tilde{\Gamma}}(\iota(v)) = (\varepsilon(\tau(v)), exp_{h}^{\Gamma}(v)).
	\end{equation}
	Next, we will see that the map $exp_h^{\Gamma}: \tilde{\mathcal U} \cap AG \to G$ is a local diffeomorphism.
	Suppose that $X_v \in T_v(\tilde{\mathcal U} \cap AG)$, with $v \in \tilde{\mathcal U} \cap AG$, and
	\[
	0 = (T_v exp_h^{\Gamma})(X_v) = (T_v(\tilde{\tau} \circ \Phi_{h}^{\tilde{\Gamma}}))(X_v).
	\]
	This implies that
	\[
	(T_v(\alpha \circ \tilde{\tau} \circ \Phi_{h}^{\tilde{\Gamma}}))(X_v) = 0.
	\]
	But, since the trajectory of $\tilde{\Gamma}$ over a point $q$ of $\tau(\tilde{\mathcal U}\cap AG)$ is contained in the fiber
	$\alpha^{-1}(q)$, we deduce that
	\[
	\alpha \circ \tilde{\tau} \circ \Phi_{h}^{\tilde{\Gamma}} \circ \iota = \tau.
	\]
	Thus, we have that
	\[
	(T_v \tau)(X_v) = 0.
	\]
	Therefore, from (\ref{expresion-expo-tilde}), we deduce that
	\[
	(T_v(exp_h^{\tilde{\Gamma}} \circ \iota)) (X_v) = 0
	\]
	and it follows that $X_v = 0$.
	
	We conclude that there exists an open subset ${\mathcal U}' \subseteq AG$, with $v_{(h, q_0)} \in {\mathcal U}'$, and an open
	subset $U' \subseteq G$, such that $\varepsilon (q_0) \in U'$ and
	\[
	exp_h^{\Gamma}: {\mathcal U}' \subseteq AG \to U' \subseteq G
	\]
	is a diffeomorphism.
	
	Next, using that $\varepsilon: Q \to G$ is a continuous map, we have that there exists an open subset $W$ of $Q$ such that 
	$q_0 \in W$ and $\varepsilon(W) \subseteq U'$. So, $\alpha^{-1}(W)$ and $U = U' \cap \alpha^{-1}(W)$ are open subsets of $G$ and
	\[
	\varepsilon (q_0) \in U, \; \; \varepsilon(\alpha(U))\subseteq U.
	\]
	Thus, we may take
	\[
	{\mathcal U} = (exp_h^{\Gamma})^{-1}(U) \subseteq AG, 
	\]
	and (i) and the first part of (ii) in the theorem hold.
	
	Finally, using that the \sode\ $\tilde{\Gamma}$ is $\Psi$-related with the \sode\  $\Gamma$ and the fact that $\tau \circ \Psi = \beta \circ \tilde{\tau}$, we deduce the last part of the theorem.
\end{proof}
\begin{remark}
	The conditions satisfied by the curves $\sigma_{\varepsilon(\alpha(g))gh}$ and $a_{\varepsilon(\alpha(g))gh}$ in the previous theorem can be interpreted
	in terms of $AG$-homotopy of paths (see~\cite{CrFe} for the
	definitions). Indeed, if we reparametrize the curve $a_{\varepsilon(\alpha(g))gh}$ and define
	the curve $\bar{a}_{\varepsilon(\alpha(g))g}: [0,1] \to AG$ by $\bar{a}_{\varepsilon(\alpha(g))g}(s)=ha_{\varepsilon(\alpha(g))gh}(sh)$, and
	similarly we reparametrize $\sigma_{\varepsilon(\alpha(g))gh}$ and define
	$\bar{\sigma}_{\varepsilon(\alpha(g))g}: [0,1] \to G$ by $\bar{\sigma}_{\varepsilon(\alpha(g))g}(s)=\sigma_{\varepsilon(\alpha(g))gh}(sh)$,
	then the curve $\bar{a}_{\varepsilon(\alpha(g))g}$ is an $AG$-path in the $AG$-homotopy
	class defined by the element   $g\in G$. Indeed, it is clear that
	$$\bar{a}_{\varepsilon(\alpha(g))g}(t)=Tl_{\bar{\sigma}_{\varepsilon(\alpha(g(t)))g(t)^{-1}}}(\frac{d \bar{\sigma}_{\varepsilon(\alpha(g))gh}}{dt}_{|t})\; ,$$ and
	that $\bar{\sigma}_{\varepsilon(\alpha(g))g}(0)=\varepsilon(\alpha(g))$ and $\bar{\sigma}_{\varepsilon(\alpha(g))g}(1)=g$.
\end{remark}
We will denote by
\[
R^{e^-}_h: U \subseteq G \to {\mathcal U}\subseteq AG \mbox{ and } R^{e^+}_h: U \subseteq G \to \Phi^{\Gamma}_h({\mathcal U})\subseteq AG
\]
the inverse maps of the diffeomorphisms $exp^{\Gamma}_h: {\mathcal U} \to U$  and $exp^{\Gamma}_h \circ \Phi^{\Gamma}_{-h}: \Phi^{\Gamma}_{h}({\mathcal U}) \to U$, respectively. They are the exact retraction maps associated with $\Gamma$ at $h$.

Note that
\begin{equation}\label{retraction-flow1}
R^{e^-}_h(g) = a_{\varepsilon(\alpha(g))gh}(0) = \dot{\sigma}_{\varepsilon(\alpha(g))gh}(0)
\end{equation}
and
\begin{equation}\label{retraction-flow2}
R^{e^+}_h(g) = \Phi^{\Gamma}_h(R^{e^-}_h(g)) = a_{\varepsilon(\alpha(g))gh}(h) = T_g l_{g^{-1}}(\dot{\sigma}_{\varepsilon(\alpha(g))gh}(h)).
\end{equation}
The following diagram illustrates the situation
\[
\xymatrix{ {\mathcal U}\subseteq AG\ar@/^/[rrr]^{exp^{\Gamma}_{h}}
	\ar[dd]_{\Phi_{h}^\Gamma} &   & &U\subseteq G \ar@/^/[lll]^{R^{e^-}_{h}}\ar[ddlll]^{{R^{e^+}_{h}}}\\
	&  & &\\
	\Phi^{\Gamma}_{h}({\mathcal U})  \subseteq AG&  & & }
\]

\subsection{The general case}

In the general case, when we have a general Lie algebroid $(E, \lcf\; ,\rcf, \rho)$, it is possible to construct a local Lie groupoid $G$ integrating this Lie algebroid. This groupoid is local in the sense that the product is not necessarily defined on $G_2$, but only locally defined near the identity section  (see \cite{CrFe} for details). In any case, Theorem \ref{convexity-definitivo} is a local result for points near of the identities and, therefore,  it remains valid for general Lie algebroids.

\section{Homogeneous quadratic second order differential equations}\label{sec5}
\subsection{The standard case}\label{standard-case-homogeneous}

Let $\Gamma$ be a \sode\ on $TQ$ and $\Delta$ the Euler vector field on $TQ$. As we know, the flow of $\Delta$ is
\[
\Phi^{\Delta}: \mathbb{R} \times TQ \to TQ, \; \; (t, v_q) \in \mathbb{R} \times T_qQ \to \Phi^{\Delta}_t(v_q) = e^{t} v_q \in T_qQ.
\]
Then, $\Gamma$ is said to be a homogeneous quadratic \sode\ if
\begin{equation}\label{eqprima}
[\Delta, \Gamma] = \Gamma
\end{equation}
(see, for instance, \cite{LeRo}).

From (\ref{eqprima}), we have that the vector field $\Gamma$ 
is homogeneous of weight 1 with respect to $\Delta$. Anyway, in this paper, 
we will use the terminology, homogeneous quadratic $\sode$, for the following reason.
If the local expression of $\Gamma$ is
\[
\Gamma(q, \dot{q}) = \dot{q}^{i} \frac{\partial}{\partial q^{i}} + \Gamma^{i}(q, \dot{q}) \frac{\partial}{\partial \dot{q}^{i}}
\]
then, using that $\displaystyle \Delta = \dot{q}^{i}\frac{\partial}{\partial \dot{q}^{i}}$, it follows that
\[
\Delta(\Gamma^{i}) = 2\Gamma^{i}, \; \; \mbox{ for every } i,
\]
or, equivalently,
\[
\Gamma^{i}(q, \dot{q}) = \Gamma^{i}_{jk}(q)\dot{q}^j\dot{q}^{k}, \; \; \mbox{ for every } i.
\]
In other words, the (local) function $\Gamma^{i}$ is a fiberwise homogeneous quadratic polynomial function, for every $i$. In particular, this implies that $\Gamma$ vanishes along the zero section of $\tau_Q: TQ \to Q$. Thus, the trajectory of $\Gamma$ with initial velocity $0_q \in T_qQ$ ($q \in Q$) is the constant curve
\[
c_{0_q}: \mathbb{R} \to Q, \; \; t \in \mathbb{R} \to c_{0_q}(t) = q \in Q.
\]
On the other hand, if 
\[
c_{v_q}: I \to Q, \; \; t \in I \to c_{v_q}(t) \in Q,
\]
is the trajectory of $\Gamma$ with initial velocity $v_q \in T_qQ$ and $s$ is a sufficiently small real number, then the trajectory of $\Gamma$ with initial velocity $sv_{q}$ is the homothetic reparametrization of $c_{v_q}$ given by
\[
c_{sv_q}: J \to Q, \; \; u \in J \to c_{sv_q}(u) = c_{v_q}(su) \in Q.
\]
In particular, 
\begin{equation}\label{homogeneity}
c_{v_q}(s) = c_{sv_q}(1).
\end{equation}
Thus, if $h$ is a sufficiently small positive number and $v_q \in T_qQ$ then
\[
v_q \in D^{\Gamma}_{(h, q)} \Rightarrow hv_q \in D^{\Gamma}_{(1, q)}.
\]
Therefore, in what follows, we will consider the starshaped open subset $D^{\Gamma}_{(1, q)}$ of $T_qQ$ about $0_q \in T_qQ$ given by
\[
D^{\Gamma}_{(1, q)} = \{v_q \in T_qQ / (1, v_q) \in D^{\Gamma} \}.
\]
As in the general case of an standard \sode\, we will denote by $D^{\Gamma}_1$ the open subset of $TQ$ defined by
\[
D^{\Gamma}_1 = \cup_{q \in Q}D^{\Gamma}_{(1, q)}.
\]
$D^{\Gamma}_1$ is an starshaped open subset of $TQ$ about the zero section $0: Q \to TQ$ of $TQ$ and we have the exponential map of $\Gamma$ at $h = 1$, which we will simply denote by $exp^{\Gamma}: D^{\Gamma}_1 \subseteq TQ \to Q \times Q$, given by
\[
v_{q'} \in D^{\Gamma}_1 \cap T_{q'}Q \to exp^{\Gamma}(v_{q'}) = (\tau_Q(v_{q'}), exp^{\Gamma}_{\tau_Q(v_{q'})}(v_{q'})) = (q', exp^{\Gamma}_{q'}(v_{q'})) \in Q \times Q.
\]
In the particular case of a homogeneous quadratic \sode\, this map has some additional properties. In fact, using (\ref{homogeneity}), it follows that
\begin{equation}\label{+homogeneity}
exp^{\Gamma}(0_{q'}) = (q', q'), \; \; exp^{\Gamma}(tv_{q'}) = (q', c_{v_{q'}}(t)),
\end{equation}
for $q' \in \tau_Q(D^{\Gamma}_1)$, $v_{q'} \in D^{\Gamma}_1$ and $t \in [0, 1]$. 

On the other hand, as we know, the linear map
\[
T_{q'}Q \times T_{q'}Q \to T_{0_{q'}}(TQ), \; \; (u_{q'}, v_{q'}) \to ((T_{q'}0)(u_{q'}), \frac{d}{dt}_{|t =0}(tv_{q'})) 
\]
is an isomorphism. So, we can identify the tangent space $T_{0_{q'}}(D^{\Gamma}_1) = T_{0_{q'}}(TQ)$ with the product $T_{q'}Q \times T_{q'}Q$. Under this identification and using (\ref{+homogeneity}), we deduce that
\[
T_{0_{q'}}exp^{\Gamma}: T_{0_{q'}}(D^{\Gamma}_1) \simeq T_{q'}Q \times T_{q'}Q \to T_{q'}Q \times T_{q'}Q
\]
is just the identity map.

Therefore, if $q$ is a fixed point of $Q$, this implies that there exists an starshaped open subset ${\mathcal U}$ of $TQ$ about the restriction to $\tau_Q({\mathcal U})$ of the zero section, with $q \in \tau_Q({\mathcal U})$, such that the exponential map
\[
exp^{\Gamma}: {\mathcal U} \subseteq TQ \to \tau_Q({\mathcal U}) \times \tau_Q({\mathcal U}),
\]
is a diffeomorphism.

\subsection{The general case}

Let $G$ be a Lie groupoid over $Q$ and $\Gamma$ a \sode\ on the Lie algebroid $\tau: AG \to Q$. Then, following the previous section, we can introduce in a natural way the notion of a homogeneous quadratic \sode\ on $AG$.
\begin{definition} 
	$\Gamma$ is said to be a homogeneous quadratic \sode\ on $AG$ if
	\[
	[\Delta, \Gamma] = \Gamma,
	\]
	where $\Delta$ is the Euler vector field of $AG$ with global flow
	\[
	\Phi^{\Delta}: \mathbb{R} \times AG \to AG, \; \; (t, a) \to \Phi^{\Delta}(t, a) = e^{t}a.
	\]
\end{definition}
Let $(q^{i})$ be local coordinates on $Q$, $\{e_\alpha\}$ a local basis of $\Gamma(AG)$ and $(q^{i}, y^\alpha)$ the corresponding local coordinates on $AG$. If the local expression of the \sode\ $\Gamma$ on $AG$ is
\[
\Gamma(q, y) = \rho^{i}_\alpha(q) y^\alpha \frac{\partial}{\partial q^{i}} + \Gamma^{\alpha}(q, y) \frac{\partial}{\partial y^\alpha},
\]
then, using that 
\[
\Delta = y^\alpha \frac{\partial}{\partial y^\alpha},
\]
it follows that $\Gamma$ is homogeneous quadratic if and only if $\Gamma^\alpha$ is a fiberwise homogeneous quadratic function on $AG$, for every $\alpha$. This means that
\[
\Gamma^\alpha(q, y) = \Gamma^\alpha_{\beta \gamma}(q) y^\beta y^\gamma.
\]
Next, we will prove that the unique \sode\ on $V\alpha$, which is $\Psi$-related with a homogeneous quadratic \sode\ on $AG$, also is homogeneous quadratic.
\begin{proposition}\label{homogeneous-quadratic-related}   
	Let $\Gamma$ be a homogeneous quadratic \sode\ on $AG$ and $\tilde{\Gamma}$ the unique \sode\ on $V\alpha$ which is $\Psi$-related with $\Gamma$. Then, $\tilde{\Gamma}$ also is homogeneous quadratic. 
\end{proposition}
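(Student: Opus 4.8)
The plan is to reduce the whole statement to the uniqueness assertion of Proposition~\ref{rel-SODES}. Write $\Delta$ for the Euler vector field of $AG\to Q$ (as in the Definition, with flow $\Phi^{\Delta}_t(a)=e^{t}a$) and $\tilde{\Delta}$ for the Euler vector field of the vector bundle $V\alpha\to G$, whose flow is again fibrewise scalar multiplication $\Phi^{\tilde{\Delta}}_t(v)=e^{t}v$. The hypothesis on $\Gamma$ is $[\Delta,\Gamma]=\Gamma$, and the goal is the analogous identity $[\tilde{\Delta},\tilde{\Gamma}]=\tilde{\Gamma}$ on $V\alpha$. Rather than trying to transport vector fields along $\Psi$ directly, I would produce a second \sode\ on $V\alpha$ that is $\Psi$-related to $\Gamma$, and then appeal to uniqueness.

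First I would observe that $\Psi$ intertwines the two Euler vector fields. Since $\Psi\colon V\alpha\to AG$ is a morphism of vector bundles it is linear on each fibre, so $\Psi(e^{t}v)=e^{t}\Psi(v)$, i.e.\ $\Psi\circ\Phi^{\tilde{\Delta}}_t=\Phi^{\Delta}_t\circ\Psi$ for every $t$; differentiating at $t=0$ yields $T\Psi\circ\tilde{\Delta}=\Delta\circ\Psi$, so $\tilde{\Delta}$ is $\Psi$-related to $\Delta$. By construction $\tilde{\Gamma}$ is $\Psi$-related to $\Gamma$, and since the bracket of $\Psi$-related vector fields is $\Psi$-related to the bracket of the images, it follows that $[\tilde{\Delta},\tilde{\Gamma}]$ is $\Psi$-related to $[\Delta,\Gamma]$, which equals $\Gamma$ by the homogeneity hypothesis.

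The key technical step is to check that $[\tilde{\Delta},\tilde{\Gamma}]$ is again a \sode\ on $V\alpha$. This is in fact a general property: if $\Gamma_E$ is any \sode\ on a Lie algebroid $\tau_E\colon E\to N$ with anchor $\rho_E$ and Euler vector field $\Delta_E$, then, using that $\tau_E\circ\Phi^{\Delta_E}_{t}=\tau_E$ and that $\rho_E(e^{t}a)=e^{t}\rho_E(a)$, one computes for each $a\in E$
\[
(T\tau_E)\big([\Delta_E,\Gamma_E](a)\big)=\frac{d}{dt}\Big|_{t=0}(T\tau_E)\big((\Phi^{\Delta_E}_{-t})_*\Gamma_E(a)\big)=\frac{d}{dt}\Big|_{t=0}\,e^{t}\rho_E(a)=\rho_E(a),
\]
which is exactly the defining condition of a \sode. (Equivalently, in adapted coordinates the base component of $[\Delta_E,\Gamma_E]$ coincides with that of $\Gamma_E$.) Applying this with $E=V\alpha$ shows that $[\tilde{\Delta},\tilde{\Gamma}]$ is a \sode.

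Finally I would invoke uniqueness. Both $\tilde{\Gamma}$ and $[\tilde{\Delta},\tilde{\Gamma}]$ are now \sode s on $V\alpha$ that are $\Psi$-related to the same \sode\ $\Gamma$ on $AG$; since $\Psi$ is a fibrewise bijective morphism of Lie algebroids, Proposition~\ref{rel-SODES} forces such a \sode\ to be unique, whence $[\tilde{\Delta},\tilde{\Gamma}]=\tilde{\Gamma}$, which is precisely the homogeneous quadratic condition for $\tilde{\Gamma}$. I expect the main obstacle to be exactly this last point: because $\beta$ need not be injective and $\dim V\alpha$ generally exceeds $\dim AG$, the map $\Psi$ is not a diffeomorphism and $T\Psi$ is not injective, so one cannot conclude $[\tilde{\Delta},\tilde{\Gamma}]=\tilde{\Gamma}$ merely from both being $\Psi$-related to $\Gamma$. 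Routing the argument through the uniqueness of Proposition~\ref{rel-SODES}, which in turn is available only after establishing the \sode\ property of the bracket in the third paragraph, is what makes the deduction valid.
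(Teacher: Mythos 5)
Your proposal is correct and follows essentially the same route as the paper's proof: show that $[\tilde{\Delta},\tilde{\Gamma}]$ is a \sode\ on $V\alpha$, show that it is $\Psi$-related to $[\Delta,\Gamma]=\Gamma$ because $\Psi$ intertwines the Euler vector fields, and conclude by the uniqueness in Proposition~\ref{rel-SODES}. The only (immaterial) difference is that you verify the \sode\ property of the bracket by a flow computation, whereas the paper uses the characterization (\ref{SODE-2}) together with the fact that the Lie derivative along the Euler vector field fixes fiberwise linear functions.
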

\begin{proof}
	We will proceed as follows: 
	\begin{enumerate}
		\item
		We will see that the Lie bracket $[\tilde{\Delta}, \tilde{\Gamma}]$ is a \sode\ on the Lie algebroid $V\alpha$, where $\tilde{\Delta}$ is the Euler vector field of the Lie algebroid $\tilde{\tau}: V\alpha \to G$.
		\item
		We will prove that the \sode\ $[\tilde{\Delta}, \tilde{\Gamma}]$ is $\Psi$-related with $\Gamma$.
	\end{enumerate}
	Thus, from Proposition \ref{rel-SODES}, we will conclude that the result holds.
	
	(i) Let $\tilde{f}$ be a real $C^{\infty}$-function on $G$. Then, using (\ref{SODE-2}) and the fact that $\tilde{\Delta}$ is a $\tilde{\tau}$-vertical vector field, we have that
	\[
	[\tilde{\Delta}, \tilde{\Gamma}](\tilde{f} \circ \tilde{\tau}) = \tilde{\Delta}(\tilde{\Gamma}(\tilde{f} \circ \tilde{\tau})) - \tilde{\Gamma}(\tilde{\Delta}(\tilde{f} \circ \tilde{\tau})) = \tilde{\Delta}(\widehat{d^{V\alpha}\tilde{f}}).
	\]
	On the other hand, since the Lie derivative with respect to $\tilde{\Delta}$ of a fiberwise linear function on $V\alpha$ is just the same function, we deduce that
	\[
	[\tilde{\Delta}, \tilde{\Gamma}](\tilde{f} \circ \tilde{\tau}) = \widehat{d^{V\alpha}\tilde{f}}.
	\]
	This, from (\ref{SODE-2}), implies (i).
	
	(ii) Using that $\Psi$ is a vector bundle morphism, it follows that
	\[
	\Psi \circ \Phi^{\tilde{\Delta}}_t = \Phi^{\Delta}_t \circ \Psi,
	\]
	and, therefore, $\tilde{\Delta}$ and $\Delta$ are $\Psi$-related. So, since that $\tilde{\Gamma}$ and $\Gamma$ also are $\Psi$-related, we conclude that the Lie brackets $[\tilde{\Delta}, \tilde{\Gamma}]$ and  $[\Delta, \Gamma]$ also are $\Psi$-related. So, using that $[\Delta, \Gamma] = \Gamma$, we deduce the result.
\end{proof}
Under the same hypotheses as in Proposition \ref{homogeneous-quadratic-related}, we will denote by
\[
exp^{\Gamma}_h: {\mathcal U} \subseteq AG \to U\subseteq G
\]
the exponential map associated with $\Gamma$ at $h > 0$ and for the point $q_0 \in Q$ as in Theorem \ref{convexity-definitivo}, that is,
\[
exp^{\Gamma}_h(v) = \tilde{\tau}(\Phi^{\tilde{\Gamma}}_h(v)), \; \; \mbox{ for } v \in {\mathcal U}.
\]
Using that $\tilde{\Gamma}$ is a homogeneous quadratic \sode\, we can take $h = 1$ and ${\mathcal U}$ an starshaped open subset of $AG$ about the restriction of the zero section to $\tau({\mathcal U}) \subseteq Q$. We will denote by
\[
exp^{\Gamma}: {\mathcal U} \subseteq AG \to U \subseteq G
\]
the corresponding map and we will prove the following result.
\begin{proposition}
	If $exp^{\Gamma}: {\mathcal U} \subseteq AG \to U\subseteq G$ is the exponential map associated with a homogeneous quadratic \sode\ $\Gamma$ on $AG$, we have that
	\begin{equation}\label{Properties-homogeneous-quadratic-SODES}
	exp^{\Gamma}(0_q) = \varepsilon(q), \; \; exp^{\Gamma}(tv_{\varepsilon(q)}) = \sigma_{v_{\varepsilon(q)}}(t),
	\end{equation}
	for $q \in \tau({\mathcal U})$, $v_{\varepsilon(q)} \in {\mathcal U} \cap A_qG$ and $t \in [0, 1]$, where
	\[
	\sigma_{v_{\varepsilon(q)}}: [0, 1] \to U \subseteq G
	\]
	is the trajectory of $\tilde{\Gamma}$ with initial velocity $v_{\varepsilon(q)}$. Moreover, under the canonical identifications,
	\[
	T_{0_q}exp^{\Gamma}: T_{0_q}({\mathcal U}) \simeq T_qQ \times A_qG \to T_{\varepsilon(q)}G \simeq T_qQ \times A_qG
	\]
	is just the identity map.
\end{proposition}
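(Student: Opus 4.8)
The plan is to reduce everything to the behavior of the $\Psi$-related \sode\ $\tilde{\Gamma}$ on $V\alpha \to G$, which is again homogeneous quadratic by Proposition \ref{homogeneous-quadratic-related}, and to transport to it the homogeneity properties established for standard homogeneous quadratic \sode s in Section \ref{standard-case-homogeneous}. This is legitimate because, as shown in Section \ref{fibration}, a \sode\ on $V\alpha$ is a parametrised family of ordinary \sode s (one on each fibre $\alpha^{-1}(q)$), so the fibrewise conclusions of Section \ref{standard-case-homogeneous} apply verbatim to $\tilde{\Gamma}$, with the base $G$ playing the role of the parameter space. Throughout I would use the defining formula $exp^{\Gamma} = \tilde{\tau} \circ \Phi^{\tilde{\Gamma}}_1 \circ \iota$ together with $\iota(A_qG) = V_{\varepsilon(q)}\alpha$, so that $\iota(v_{\varepsilon(q)})$ is the vertical vector $v_{\varepsilon(q)}$ based at $\varepsilon(q)$, and I would write $\sigma_{v_{\varepsilon(q)}}(t) = \tilde{\tau}(\Phi^{\tilde{\Gamma}}_t(v_{\varepsilon(q)}))$.

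The two identities in (\ref{Properties-homogeneous-quadratic-SODES}) then follow quickly. For the first, since $\tilde{\Gamma}$ is homogeneous quadratic it vanishes along the zero section of $\tilde{\tau}$, so $\iota(0_q) = 0_{\varepsilon(q)}$ is a rest point of the flow; hence $\Phi^{\tilde{\Gamma}}_1(0_{\varepsilon(q)}) = 0_{\varepsilon(q)}$ and $exp^{\Gamma}(0_q) = \tilde{\tau}(0_{\varepsilon(q)}) = \varepsilon(q)$. For the second, I would invoke the homogeneity relation (\ref{homogeneity}) for $\tilde{\Gamma}$, namely $\sigma_{v_{\varepsilon(q)}}(t) = \sigma_{tv_{\varepsilon(q)}}(1)$ for $t \in [0,1]$ (here $tv_{\varepsilon(q)} \in {\mathcal U}$ because ${\mathcal U}$ is starshaped about the zero section), and then read off $\sigma_{tv_{\varepsilon(q)}}(1) = \tilde{\tau}(\Phi^{\tilde{\Gamma}}_1(tv_{\varepsilon(q)})) = exp^{\Gamma}(tv_{\varepsilon(q)})$.

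For the statement about $T_{0_q}exp^{\Gamma}$ I would differentiate the two identities just proved, using the given identifications $T_{0_q}({\mathcal U}) \simeq T_qQ \times A_qG$ (base part via $T_q 0$, fibre part via the vertical lift $a_q \mapsto \frac{d}{dt}|_{t=0}(ta_q)$) and $T_{\varepsilon(q)}G \simeq T_qQ \times A_qG$ (base part via $T_q\varepsilon$, fibre part via the inclusion $A_qG = V_{\varepsilon(q)}\alpha \hookrightarrow T_{\varepsilon(q)}G$). On a base vector $(T_q0)(u_q)$, represented by the curve $s \mapsto 0_{c(s)}$ with $\dot{c}(0) = u_q$, the first identity gives $exp^{\Gamma}(0_{c(s)}) = \varepsilon(c(s))$, so its derivative is $(T_q\varepsilon)(u_q)$, i.e. $(u_q,0) \mapsto (u_q,0)$. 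On a fibre vector $\frac{d}{dt}|_{t=0}(ta_q)$, the second identity gives $exp^{\Gamma}(ta_q) = \sigma_{a_q}(t)$, whose derivative at $t=0$ is $\dot{\sigma}_{a_q}(0) = a_q \in V_{\varepsilon(q)}\alpha$, i.e. $(0,a_q) \mapsto (0,a_q)$. Hence $T_{0_q}exp^{\Gamma}$ is the identity.

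The computation is essentially routine once this framework is in place; the only genuine care needed, and the step I would treat as the main obstacle, is the bookkeeping of the canonical identifications, in particular checking that the two copies of $T_qQ \times A_qG$ are matched by $T_q0 \leftrightarrow T_q\varepsilon$ on the base part and by the vertical lift $\leftrightarrow$ vertical inclusion on the fibre part, so that the derivative of $exp^{\Gamma}$ really lands on the identity rather than on some nontrivial automorphism. I would verify this precisely through the two explicit one-parameter families above, which isolate the base and fibre blocks cleanly and show there is no off-diagonal coupling.
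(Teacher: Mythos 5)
Your argument is correct and follows essentially the same route as the paper: reduce to the $\Psi$-related homogeneous quadratic \sode\ $\tilde{\Gamma}$ on $V\alpha$, apply the fibrewise results of Section \ref{standard-case-homogeneous} to obtain (\ref{Properties-homogeneous-quadratic-SODES}), and then differentiate along the two one-parameter families $s\mapsto 0_{c(s)}$ and $t\mapsto tv_{\varepsilon(q)}$ under the stated identifications. Your explicit check that the base and fibre blocks match (with the fibre part of $T_{\varepsilon(q)}G$ read as the inclusion $A_qG=V_{\varepsilon(q)}\alpha\hookrightarrow T_{\varepsilon(q)}G$) is exactly what the paper leaves implicit.
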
 
\begin{proof}
	The \sode\ $\tilde{\Gamma}$ on the Lie algebroid $\tilde{\tau}: V \alpha \to G$ may be considered as a smooth family of standard \sode s, one on each fiber of the source map $\alpha: G \to Q$. In addition, each one of these \sode s is homogeneous quadratic. So, using the results in Section \ref{standard-case-homogeneous}, we deduce that (\ref{Properties-homogeneous-quadratic-SODES}) holds.
	
	Now, the linear map 
	\[
	T_qQ \times A_qG \to T_{0_q}AG, \; \; (u_q, v_{\varepsilon(q)}) \to (T_q0)(u_q) + \frac{d}{dt}_{|t=0}(t v_{\varepsilon(q)})
	\]
	is a linear isomorphism, where $0: Q \to AG$ is the zero section. Thus, we can identify $T_{0_q}{\mathcal U} = T_{0_q}AG$ 
	with the product space $T_qQ \times A_qG$.
	
	On the other hand, the linear map
	\[
	T_qQ \times A_qG \to T_{\varepsilon(q)}G, \; \; (u_q, v_{\varepsilon(q)}) \to (T_q\varepsilon)(u_q) + \frac{d}{dt}_{|t=0}(t v_{\varepsilon(q)})
	\]
	also is a linear isomorphism and we can identify $T_{\varepsilon(q)}G$ with the same product space $T_qQ \times A_qG$. 
	
	Under the previous identifications and using (\ref{Properties-homogeneous-quadratic-SODES}), we conclude that 
	\[
	T_{0_q}exp^{\Gamma}:  T_{0_q}({\mathcal U}) \simeq T_qQ \times A_qG \to T_{\varepsilon(q)}G \simeq T_qQ \times A_qG
	\]
	is the identity map.
\end{proof}

\section{Conclusions and future work}\label{conclusions-future-work}

We have developed a local convexity theory for a \sode\ $\Gamma$ on the Lie algebroid $AG$ of a Lie groupoid $G$. In fact, we introduce the exponential map associated with $\Gamma$ as a local diffeomorphism between $AG$ and $G$. The particular case when $\Gamma$ is homogeneous quadratic is discussed in the last part of the paper.

Now, in the presence of a mechanical continuous Lagrangian function $L: AG \to \mathbb{R}$, we can consider the \sode\ $\Gamma_L$ on $AG$ whose trajectories are the solutions of the Euler-Lagrange equations for $L$ (see \cite{LeMaMa,Ma0}). Then, we can apply the results in this paper to $\Gamma_L$ and we can introduce, in a natural way, a discrete Lagrangian function $\mathbb{L}_h^{e}: G \to \mathbb{R}$ on $G$, with $h$ a sufficiently small positive real number. $\mathbb{L}_h^{e}$ is the exact discrete Lagrangian function associated with $L$ (see \cite{MaMaMa3,MaMaMa4}). In addition, if we take a discrete Lagrangian function $L_d: G \to \mathbb{R}$ which is an approximation of order $r$ of $\mathbb{L}_h^{e}$, then one may prove that the discrete scheme induced by $L_d$ is an approximation of the continuous flow $\Gamma_L$ of order $r$ (see \cite{MaMaMa3,MaMaMa4}). In other words, $\mathbb{L}_h^{e}$ is crucial to discuss the variational error analysis.

On the other hand, the results in Section \ref{section2} of this paper also play an important role in the definition of the exponential map and the exact discrete submanifold associated with an standard mechanical nonholonomic system. In turn, the previous objects allow to introduce the exact discrete nonholonomic equations (see \cite{AnMaMa}).

\section*{Acknowledgments} The first part of the proof of Theorem \ref{isomorfismo-exp-h0-q0} was proposed to two of the authors (JCM and DMdeD) by 
JC Sabina de Lis (ULL, Spain). This result is relevant for the rest of the paper. So, the authors are very grateful for the invaluable comments of JC Sabina de Lis. JCM acknowledges financial support from
the IUMA (University of Zaragoza, Spain) and the Spanish Ministry of Science and Innovation under grant PGC2018-098265-B-C32. D. Mart{\'\i}n de Diego acknowledges financial support from
the Spanish Ministry of Science and Innovation, under grant PID2019-
106715GB-C21 and  from the Spanish Ministry of Science and Innovation, through the ``Severo Ochoa Programme for Centres of Excellence in R\&D" (CEX2019-000904-S). EMF acknowledges financial support from
the Spanish Ministry of Science and Innovation under grant PGC2018-098265-B-C31.

\appendix
\section{Proofs of Theorems \ref{convexity1} and \ref{isomorfismo-exp-h0-q0}} 
\label{Hartmann}

In this appendix, we will give a proof of Theorems \ref{convexity1} and \ref{isomorfismo-exp-h0-q0}.
For this purpose, we will use some standard results on second order differential equations on $\R^n$ (see \cite{Ha}).

Let
\[
\displaystyle \frac{d^2q^{i}}{dt^2} = \xi^{i}(t, q^j, \frac{dq^j}{dt}), \; \; \forall i \in \{1, \dots, n\}
\]
be a system of second order differential equations on $\R \times \R^n$, with $\xi^{i}$ a real $C^{\infty}$-function on a compact subset of $\R \times \R^{2n}$ which contains the origin.

We will consider the problem of the existence of solutions satisfying the boundary conditions
\[
q^{i}(0) = 0, \; \; \; q^{i}(h_0) = 0, \; \; \forall i, \mbox{ with } h_0 > 0.
\]
In this direction, using Corollary 4.1 of  Chapter XII in \cite{Ha}, we deduce the following result.
\begin{theorem}\label{Hartman} 
	Let $\xi^{i}(t, q, \dot{q})$ be continuous for $1 \leq i \leq n$, $0 \leq t \leq h_0$ ($h_0 > 0$), $\|q\| \leq R$, $\|\dot{q}\| \leq \dot{R}$
	such that $f$ satisfies a Lipschitz condition with respect to $q, \dot{q}$ of the form
	\[
	\| \xi(t, q_1^j, \dot{q}^j_1) - \xi(t, q_2^j, \dot{q}_2^j)\| \leq C \| q_2 - q_1\| + \dot{C} \| \dot{q}_2 - \dot{q}_1 \|
	\]
	with Lipschitz constants $C, \dot{C}$, so small that
	\[
	\frac{C h_0^2}{8} + \frac{\dot{C} h_0}{2} < 1. 
	\]
	In addition, suppose that $\| \xi(t, q^j, \dot{q}^j) \| \leq M$ and that
	\[
	\frac{M h_0^2}{8} \leq R, \; \; \; \frac{M h_0}{2} \leq \dot{R}.
	\]
	Then, the system of second order differential equations
	\[
	\frac{d^2 q^{j}}{dt^2} = \xi^{j} (t, q^{i}, \dot{q}^{i}), \; \; \mbox{ for all } j
	\]
	has a unique solution satisfying
	\[
	\|q(t)\| \leq R, \; \; \|\dot{q}(t)\| \leq \dot{R}, \; \; q^{i}(0) = 0, \; \; q^{i}(h_0) = 0, \; \; \mbox{ for all } t \in [0, h_0] \mbox{ and } 1 \leq i \leq n.
	\]
\end{theorem}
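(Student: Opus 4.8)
The plan is to recast the two-point boundary value problem as a fixed-point equation and then invoke the contraction mapping principle. First I would record the elementary fact that, for a fixed continuous right-hand side $g$, the unique solution of $\ddot q = g$ on $[0,h_0]$ with $q(0)=q(h_0)=0$ is
\[
q(t)=\int_0^{h_0} K(t,s)\,g(s)\,ds,
\qquad
K(t,s)=\begin{cases}-\dfrac{(h_0-t)s}{h_0}, & 0\le s\le t,\\[4pt] -\dfrac{t(h_0-s)}{h_0}, & t\le s\le h_0,\end{cases}
\]
where $K$ is the Dirichlet Green kernel. Solving our system with the prescribed data is therefore equivalent to finding a fixed point of the operator $T$ defined by $(Tq)(t)=\int_0^{h_0}K(t,s)\,\xi(s,q(s),\dot q(s))\,ds$ on $C^1$ maps $q\colon[0,h_0]\to\R^n$; note that $(Tq)(0)=(Tq)(h_0)=0$ automatically, since $K$ vanishes in its first slot at $t=0$ and $t=h_0$.

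The two structural constants in the hypotheses come from sharp bounds on this kernel. I would compute
\[
\sup_{0\le t\le h_0}\int_0^{h_0}|K(t,s)|\,ds=\sup_t\frac{t(h_0-t)}{2}=\frac{h_0^{2}}{8}
\qquand
\sup_{0\le t\le h_0}\int_0^{h_0}|\partial_t K(t,s)|\,ds=\sup_t\frac{t^2+(h_0-t)^2}{2h_0}=\frac{h_0}{2},
\]
the first maximum being attained at $t=h_0/2$ and the second at the endpoints. Differentiating under the integral sign (legitimate since $K$ is continuous and piecewise $C^1$ in $t$) gives $\dot{(Tq)}(t)=\int_0^{h_0}\partial_t K(t,s)\,\xi(s,q(s),\dot q(s))\,ds$, so together with the bound $\|\xi\|\le M$ these estimates yield $\|Tq\|_\infty\le \tfrac{h_0^2}{8}M$ and $\|\dot{(Tq)}\|_\infty\le \tfrac{h_0}{2}M$.

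I would then work in the set $B$ of $C^1$ maps $q$ with $q(0)=q(h_0)=0$, $\|q\|_\infty\le R$ and $\|\dot q\|_\infty\le \dot R$, a closed subset of $C^1([0,h_0],\R^n)$ and hence complete; on $B$ the composition $\xi(s,q(s),\dot q(s))$ is well defined and bounded by $M$. The conditions $\tfrac{Mh_0^2}{8}\le R$ and $\tfrac{Mh_0}{2}\le\dot R$ then give $T(B)\subseteq B$. The decisive step is the contraction estimate, where the key device is the choice of norm: equipping $C^1$ with $\|q\|_*:=C\|q\|_\infty+\dot C\|\dot q\|_\infty$ and combining the Lipschitz hypothesis with the two kernel bounds yields
\[
\|Tq_1-Tq_2\|_*\le\Bigl(\tfrac{Ch_0^2}{8}+\tfrac{\dot C h_0}{2}\Bigr)\bigl(C\|q_1-q_2\|_\infty+\dot C\|\dot q_1-\dot q_2\|_\infty\bigr)=\Bigl(\tfrac{Ch_0^2}{8}+\tfrac{\dot C h_0}{2}\Bigr)\|q_1-q_2\|_*.
\]
Thus $\tfrac{Ch_0^2}{8}+\tfrac{\dot C h_0}{2}<1$ is exactly the statement that $T$ contracts on $(B,\|\cdot\|_*)$, and the Banach fixed-point theorem produces a unique fixed point in $B$, which is the sought solution with the required a priori bounds. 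I expect the main obstacle to be precisely this last estimate: a naive use of the $C^1$-norm only yields the cruder sufficient condition $\tfrac{h_0}{2}(C+\dot C)<1$, and recovering the sharp constant requires noticing that $\|\cdot\|_*$ is the norm adapted to the rank-one ``Lipschitz matrix'' $\bigl(\begin{smallmatrix}\frac{h_0^2}{8}C & \frac{h_0^2}{8}\dot C\\ \frac{h_0}{2}C & \frac{h_0}{2}\dot C\end{smallmatrix}\bigr)$, whose spectral radius equals its trace $\tfrac{Ch_0^2}{8}+\tfrac{\dot C h_0}{2}$.
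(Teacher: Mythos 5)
Your proposal is correct, and it is essentially a complete reconstruction of the result that the paper does not prove at all: the paper simply invokes Corollary~4.1 of Chapter~XII of Hartman's book, and your Green's-function-plus-contraction argument is precisely the standard proof of that corollary (Hartman phrases it via successive approximations, which is the same mechanism). Your kernel computations are right: $\sup_t\int_0^{h_0}|K(t,s)|\,ds=h_0^2/8$ at $t=h_0/2$ and $\sup_t\int_0^{h_0}|\partial_tK(t,s)|\,ds=h_0/2$ at the endpoints, and these are exactly the two constants appearing in the hypotheses. The weighted norm $\|q\|_*=C\|q\|_\infty+\dot C\|\dot q\|_\infty$ is the correct device for recovering the sharp contraction constant $\tfrac{Ch_0^2}{8}+\tfrac{\dot Ch_0}{2}$, and your identification of it as the left-eigenvector norm of the rank-one Lipschitz matrix is accurate. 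Two small points worth a sentence in a written-up version: (a) uniqueness in the stated class follows because any $C^2$ solution with the given bounds and boundary values is automatically a fixed point of $T$ in $B$ (the linear Dirichlet problem $\ddot u=g$, $u(0)=u(h_0)=0$ has a unique solution); (b) if $\dot C=0$ the functional $\|\cdot\|_*$ degenerates on $C^1$, but since the smallness condition is a strict inequality one may enlarge $\dot C$ slightly and the argument goes through unchanged.
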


We will also use the following classical result.

\begin{proposition}\label{Dif-acotada-Lipschitz}
	Let $f: U \subseteq \mathbb{R}^n \to \mathbb{R}^n$ be a $C^{\infty}$-smooth map, with $U$ a convex open subset of $\mathbb{R}^n$ and suppose that there exists a positive constant $C > 0$ such that
	\[
	\|df(x)\| \leq C, \; \; \forall x \in U.
	\]
	Then, we have that
	\[
	\|f(x) - f(y) \| \leq C \|x - y\|, \; \; \mbox{ for } x, y \in U.
	\]
\end{proposition}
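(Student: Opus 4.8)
The plan is to reduce the multivariable Lipschitz estimate to a one-variable computation along the segment joining the two points, exploiting convexity to guarantee that this segment never leaves $U$. First I would fix arbitrary $x, y \in U$ and, using that $U$ is convex, observe that the affine path $\gamma(t) = (1-t)x + ty$ lies entirely in $U$ for every $t \in [0, 1]$. This is precisely the point where convexity is indispensable: without it the segment could exit the domain and $df$ would not be available along the connecting curve. I would then consider the composition $g = f \circ \gamma : [0, 1] \to \mathbb{R}^n$, which is smooth because both $f$ and $\gamma$ are, and compute by the chain rule that $g'(t) = df(\gamma(t))(y - x)$, since $\gamma'(t) = y - x$ is constant.

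Next I would apply the fundamental theorem of calculus componentwise to recover the difference $f(y) - f(x)$ as an integral:
\[
f(y) - f(x) = g(1) - g(0) = \int_0^1 g'(t)\, dt = \int_0^1 df(\gamma(t))(y - x)\, dt.
\]
Taking norms and using the standard inequality for the norm of a vector-valued integral, followed by the defining property of the operator norm $\|df(\gamma(t))(y-x)\| \leq \|df(\gamma(t))\|\,\|y - x\|$, I would obtain
\[
\|f(y) - f(x)\| \leq \int_0^1 \|df(\gamma(t))\|\,\|y - x\|\, dt \leq C\,\|y - x\|\int_0^1 dt = C\,\|y - x\|,
\]
where the last bound uses the hypothesis $\|df(z)\| \leq C$ for all $z \in U$, applied at $z = \gamma(t) \in U$. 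Since $x, y$ were arbitrary, this is exactly the claimed Lipschitz estimate.

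There is no serious obstacle in this argument; it is a genuinely classical fact. The only points demanding a little care are bookkeeping ones rather than conceptual ones: ensuring that $\|df(z)\|$ is interpreted as the operator norm so that the estimate $\|df(z)(v)\| \leq \|df(z)\|\,\|v\|$ holds with the same constant $C$, and justifying the norm inequality for the vector-valued integral (which follows by applying the triangle inequality to Riemann sums and passing to the limit, or componentwise). The structural heart of the proof is simply the combination of convexity, which makes the segment admissible, and the fundamental theorem of calculus, which converts the pointwise bound on $df$ into a global bound on the increment of $f$.
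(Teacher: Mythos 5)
Your proof is correct and complete: it is the standard mean value inequality argument (convexity to keep the segment in $U$, the fundamental theorem of calculus applied to $f\circ\gamma$, and the operator-norm bound under the integral). The paper itself offers no proof of this proposition --- it is simply quoted as a classical fact --- so there is nothing to compare against; your argument is exactly the canonical one that the authors implicitly rely on.
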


Now, we may prove Theorem \ref{convexity1}.

\begin{proof}({\it proof of Theorem \ref{convexity1}})
	
	Let $(\tilde{U}, \tilde{\varphi} \equiv (q^i))$ be a local chart
	on $Q$ such that
	\[
	\tilde{\varphi}(\tilde{U}) = B(0; \epsilon) \; \mbox{ and }
	\tilde{\varphi}(q_0) = (0, \dots , 0),
	\]
	where $B(0; \epsilon)$ is the open ball in $\mathbb{R}^n$ of
	center the origin and radius $\epsilon > 0$.
	
	We consider the corresponding local coordinates
	$(\tau_{Q}^{-1}(\tilde{U}), \bar{\varphi} \equiv (q^i, \dot{q}^i))$ on
	$TQ$. Note that $\bar{\varphi}(\tau_{Q}^{-1}(\tilde{U})) =
	\tilde{\varphi}(\tilde{U}) \times \mathbb{R}^n$. Since $\Gamma$ is
	a \sode, we also have that
	\[
	\Gamma (q, \dot{q}) = \displaystyle \dot{q}^i \frac{\partial}{\partial q^i} + \xi^{i}
	(q, \dot{q}) \frac{\partial}{\partial \dot{q}^i}.
	\]
	Then, the trajectories of $\Gamma$ in $\tilde{U}$ are the
	solutions of the system of second order differential equations
	\[
	\displaystyle \frac{d^2q^i}{dt^2} = \xi^i(q, \frac{dq}{dt}), \; \;
	\; \mbox{ for all } i.
	\]
	Now, if we take
	\[
	0 < R < \epsilon \; \; \mbox{ and } \; \; 0 < \dot{R}
	\]
	then, using that $\xi^{i}$ is a real $C^{\infty}$-function on $B(0; \epsilon) \times \mathbb{R}^n$, we deduce that there exist positive constants $C, \dot{C} > 0$ satisfying 
	\[
	\|d_1\xi(q, \dot{q})\| \leq C, \; \; \|d_2\xi(q, \dot{q})\| \leq \dot{C}, \; \; \mbox{ for } (q, \dot{q}) \in \bar{B}(0; R) \times \bar{B}(0; \dot{R}),
	\]
	where $\bar{B}(0; R)$ and $\bar{B}(0; \dot{R})$ are the closed balls in $\mathbb{R}^n$ of center the origin and radius $R$ and $\dot{R}$, respectively.
	
	Thus, from Proposition \ref{Dif-acotada-Lipschitz}, it follows that
	\[
	\begin{array}{rcl}
	\| \xi^{i}(q^j_1, \dot{q}^j_1) - \xi^{i}(q^j_2, \dot{q}^j_2)\| &\leq & \| \xi^{i}(q^j_1, \dot{q}^j_1) - \xi^{i}(q^j_2, \dot{q}^j_1)\| + \| \xi^{i}(q^j_2, \dot{q}^j_1) - \xi^{i}(q^j_2, \dot{q}^j_2)\| \\[5pt]
	& \leq & C \| q_2 - q_1\| + \dot{C} \| \dot{q}_2 - \dot{q}_1 \|
	\end{array}
	\]
	for $(q^j_1, \dot{q}^j_1), (q^j_2, \dot{q}^j_2) \in \bar{B}(0, R) \times \bar{B}(0; \dot{R})$.
	
	Moreover, it is clear that there exists a positive constant $M > 0$ and
	\[
	\| \xi(q^j, \dot{q}^j) \| \leq M, \; \; \forall (q, \dot{q}) \in \bar{B}(0; R) \times \bar{B}(0, \dot{R}).
	\]
	Next, we choose a sufficiently small positive number $h_0$ satisfying
	\[
	\displaystyle \frac{Ch_0^2}{8} + \frac{\dot{C}h_0}{2} < 1, \; \; \frac{Mh_0^2}{8} \leq R, \; \; \frac{Mh_0}{2} \leq \dot{R}.
	\]
	Now, if we take $h\in \mathbb{R}$, $0 < h \leq h_0$ and the compact subsets $C$ and $\bar{C}$ of $Q$ and $TQ$, respectively, given by
	\[
	C = \tilde{\varphi}^{-1}(\bar{B}(0; R)), \; \; \bar{C} = \bar{\varphi}^{-1}(\bar{B}(0; R) \times \bar{B}(0, \dot{R}))
	\]
	then, using Theorem \ref{Hartman}, we conclude that there exists a unique trajectory
	$\sigma_{q_{0}q_{0}h}: [0, h] \to C \subseteq Q$ of
	$\Gamma$ such that
	\[
	\sigma_{q_{0}q_{0}h}(0) = q_0, \; \; \; \sigma_{q_{0}q_{0}h}(h) =
	q_0,
	\]
	and
	\[
	\dot{\sigma}_{q_0q_0h}(t) \in \bar{C}, \; \; \mbox{ for } t \in [0, h].
	\]
	Therefore, it is sufficient to define $v_{(h, q_0)} : = \dot{\sigma}_{q_0q_0h}(0)$ and we end the proof of the result.
\end{proof}
Next, we will prove Theorem \ref{isomorfismo-exp-h0-q0}.
\begin{proof}({\it Proof of Theorem \ref{isomorfismo-exp-h0-q0}})
	
	From Theorem \ref{convexity1}, it follows that
	\[
	v_{(h, q_0)} \in D^\Gamma_{(h, q_0)} \; \; \mbox{ and } \; \; exp^{\Gamma}_{(h, q_0)}(v_{(h, q_0)}) = q_0.
	\]
	Moreover, it is clear that the map
	\[
	exp^{\Gamma}_{(h, q_0)}: D^{\Gamma}_{(h, q_0)} \subseteq T_{q_0}Q \to Q
	\]
	is smooth.
	
	Next, we will proceed locally. So, we will denote by
	\[
	(t, q^{i}, \dot{q}^{i}) \to (x^j(t, q^{i}, \dot{q}^{i}), \dot{x}^j(t, q^{i}, \dot{q}^{i}))
	\]
	the flow of the \sode\ $\Gamma$
	\[
	\Gamma(q^{j}, \dot{q}^{j}) = \dot{q}^{i}\frac{\partial}{\partial q^{i}} + \xi^{i}(q^{j}, \dot{q}^j) \frac{\partial}{\partial \dot{q}^{i}}.
	\]
	%Note that, for the sake of simplicity, we omit the superscripts in the local coordinates. 
	We have that
	\begin{equation}\label{second-order} 
	\ddot{x}^{i}(t, q^j, \dot{q}^j) = \xi^{i}(x^k(t, q^{j}, \dot{q}^{j}), \dot{x}^k(t, q^{j}, \dot{q}^{j})),
	\end{equation}
	and
	\begin{equation}\label{initial-conditions}
	x^{i}(0, q^j, \dot{q}^j) = q^{i}, \; \; \; \dot{x}^{i}(0, q^{j}, \dot{q}^j) = \dot{q}^{i}.
	\end{equation}
	The local expression of the map $exp^{\Gamma}_{(h, q_0)}$ is
	\[
	\dot{q}^{i} \to exp^{\Gamma}_{(h, q_0)}(\dot{q}^{i}) = x(h, q_0, \dot{q}^{i}).
	\]
	Denote by $\dot{q}_{0h}$ the tangent vector $v_{(h, q_0)} \in T_{q_0}Q$. We must prove that the Jacobian matrix of $exp^{\Gamma}_{(h, q_0)}$ at $\dot{q}_{0h}$
	\[
	(D_{\dot{q}} exp^{\Gamma}_{(h, q_0)})(\dot{q}_{0h}) = (D_{\dot{q}}x)(h, q_0, \dot{q}_{0h})
	\]
	is non-singular.
	
	Let $U_{(q_0,\dot{q}_{0h})}(t)$ be the Jacobian matrix of the smooth map $exp^{\Gamma}_{(t, q_0)}$ at $\dot{q}_{0h}$, that is,
	\[
	U_{(q_0,\dot{q}_{0h})}(t) = (D_{\dot{q}} exp^{\Gamma}_{(t, q_0)})(\dot{q}_{0h}) = (D_{\dot{q}}x)(t, q_0, \dot{q}_{0h}).
	\]
	Then, using (\ref{second-order}), an standard argument proves that
	\[
	\begin{array}{rcl}
	\ddot{U}_{(q_0, \dot{q}_{0h})}(t) & = & (D_{q}\xi)(x^{i}(t, q_0, \dot{q}_{0h}), \dot{x}^{i}(t, q_0, \dot{q}_{0h}))U_{(q_0,\dot{q}_{0h})}(t) \\ [5pt]
	& + & (D_{\dot{q}}\xi)(x^{i}(t, q_0, \dot{q}_{0h}), \dot{x}^{i}(t, q_0, \dot{q}_{0h}))\dot{U}_{(q_0,\dot{q}_{0h})}(t)
	\end{array}
	\]
	and, in a similar way using (\ref{initial-conditions}), we also deduce that
	\[
	U_{(q_0, \dot{q}_{0h})}(0) = 0, \; \; \; \dot{U}_{(q_0, \dot{q}_{0h})}(0) = Id.
	\]
	So, if we denote by $B_{(q_0, \dot{q}_{0h})}(t)$ and $F_{(q_0, \dot{q}_{0h})}(t)$ the matrices 
	\[
	(D_{q}\xi)(x^{i}(t, q_0, \dot{q}_{0h}), \dot{x}^{i}(t, q_0, \dot{q}_{0h})) \mbox{ and }  (D_{\dot{q}}\xi)(x^{i}(t, q_0, \dot{q}_{0h}), \dot{x}^{i}(t, q_0, \dot{q}_{0h})),
	\]
	respectively, it follows that
	\[
	\ddot{U}_{(q_0, \dot{q}_{0h})}(t) = B_{(q_0, \dot{q}_{0h})}(t) U_{(q_0, \dot{q}_{0h})}(t) + F_{(q_0, \dot{q}_{0h})}(t) \dot{U}_{(q_0, \dot{q}_{0h})}(t).
	\]
	Now, we consider the homogeneous system of second order differential equations
	\begin{equation}\label{homogeneous-system}
	\ddot{y}(t) = B_{(q_0, \dot{q}_{0h})}(t) y(t) + F_{(q_0, \dot{q}_{0h})}(t) \dot{y}(t).
	\end{equation}
	Note that $B_{(q_0,\dot{q}_{0h})}$ and $F_{(q_0, \dot{q}_{0h})}$ are $C^{\infty}$-matrices, for every sufficiently small positive number $h$.
	
	So, taking into account that there exists a compact subset $\bar{C} \subseteq TQ$ such that $v_{(h, q_0)} \in \bar{C}$ (for every $h$), using Theorem \ref{Hartman} and proceeding as in the proof of Theorem \ref{convexity1}, we conclude that there exists a sufficiently small positive number $p_0 > 0$ such that for all $h$ the unique solution 
	\[
	t \to y_{(q_0,\dot{q}_{0h})}(t)
	\]
	of the system (\ref{homogeneous-system}),
	satisfying the boundary conditions
	\[
	y_{(q_0,\dot{q}_{0h})}(0) = 0, \; \; y_{(q_0,\dot{q}_{0h})}(p) = 0, \; \; \mbox{ with } 0 < p \leq p_0,
	\]
	is the trivial solution.
	
	Thus, from Lemma 3.1, Chapter XII in \cite{Ha}, we deduce that the matriz 
	\[
	U_{(q_0, \dot{q}_{0h})}(p), \; \; \mbox{ with } 0 < p \leq p_0,
	\]
	is regular, for every $h$.
	
	Therefore, it is sufficient to take $h = p$, with $0 < p \leq p_0$, and the result is proved.
\end{proof}
\section{Lie algebroids and groupoids}\label{algebroide-grupoide}

First of all, we will recall the definition of a Lie groupoid
and some generalities about them are explained (for more details,
see \cite{Mac}).

A groupoid over a set $M$ is a set $G$ together with the
following structural maps:
\begin{itemize}
	\item A pair of maps $\alpha: G \to M$, the source, and
	$\beta: G \to M$, the target. Thus, an element $g \in G$ is
	thought as an arrow from $q_0= \alpha(g)$ to $q_1 = \beta(g)$ in $M$
	$$
	\xymatrix{*=0{\stackrel{\bullet}{\mbox{\tiny
					$q_0=\alpha(g)$}}}{\ar@/^1pc/@<1ex>[rrr]_g}&&&*=0{\stackrel{\bullet}{\mbox{\tiny
					$q_1=\beta(g)$}}}}
	$$
	The maps $\alpha$ and $\beta$ define the set of composable pairs
	$$
	G_{2}=\{(g,h) \in G \times G / \beta(g)=\alpha(h)\}.
	$$
	\item A multiplication $m: G_{2} \to G$, to be denoted simply by $m(g,h)=gh$, such that
	\begin{itemize}
		\item $\alpha(gh)=\alpha(g)$ and $\beta(gh)=\beta(h)$.
		\item $g(hk)=(gh)k$.
	\end{itemize}
	If $g$ is an arrow from $q_0 = \alpha(g)$ to $q_1 = \beta(g)
	$ and $h$ is an arrow from $q_1=\beta(g) = \alpha(h)$ to $q_2 = \beta(h)$ then
	$gh$ is the composite arrow from $q_0$ to $q_2$
	$$\xymatrix{*=0{\stackrel{\bullet}{\mbox{\tiny
					$q_0=\alpha(g)=\alpha(gh)$}}}{\ar@/^2pc/@<2ex>[rrrrrr]_{gh}}{\ar@/^1pc/@<2ex>[rrr]_g}&&&*=0{\stackrel{\bullet}{\mbox{\tiny
					$q_1=\beta(g)=\alpha(h)$}}}{\ar@/^1pc/@<2ex>[rrr]_h}&&&*=0{\stackrel{\bullet}{\mbox{\tiny
					$q_2=\beta(h)=\beta(gh)$}}}}$$
	\item An identity map $\varepsilon: Q \to G$, a section of $\alpha$ and $\beta$, such that
	\begin{itemize}
		\item $\varepsilon(\alpha(g))g=g$ and $g\varepsilon(\beta(g))=g$.
	\end{itemize}
	\item An inversion map $i: G \to G$, to be denoted simply by $i(g)=g^{-1}$, such that
	\begin{itemize}
		\item $g^{-1}g=\varepsilon(\beta(g))$ and $gg^{-1}=\varepsilon(\alpha(g))$.
	\end{itemize}
	$$\xymatrix{*=0{\stackrel{\bullet}{\mbox{\tiny
					$q_0=\alpha(g)=\beta(g^{-1})$}}}{\ar@/^1pc/@<2ex>[rrr]_g}&&&*=0{\stackrel{\bullet}{\mbox{\tiny
					$q_1=\beta(g)=\alpha(g^{-1})$}}}{\ar@/^1pc/@<2ex>[lll]_{g^{-1}}}}$$
	
\end{itemize}

A groupoid $G$ over a set $Q$ will be denoted simply by the symbol
$G \rightrightarrows Q$.

The groupoid $G \rightrightarrows Q$ is said to be a Lie
groupoid if $G$ and $Q$ are manifolds and all the structural maps
are differentiable with $\alpha$ and $\beta$ differentiable
submersions. If $G \rightrightarrows Q$ is a Lie groupoid then $m$
is a submersion, $\varepsilon$ is an immersion and $i$ is a
diffeomorphism. Moreover, if $q \in Q$, $\alpha^{-1}(q)$ (resp.,
$\beta^{-1}(q)$) will be said the $\alpha$-fiber (resp.,
the $\beta$-fiber) of $q$.

Typical examples of Lie groupoids are: the pair or banal groupoid $Q \times Q$ over $Q$, a Lie group $G$ (as a Lie groupoid over a single point), the Atiyah groupoid $(Q \times Q)/G$ (over $Q/G$) associated with a free and proper action of a Lie group $G$ on $Q$ and the Lie groupoid $G\pi$ associated with a fibration $\pi: Q \to M$ given by
\[
G\pi = \{(q, q') \in Q \times Q / \pi(q) = \pi(q') \},
\]
(it is a Lie subgroupoid of the pair groupoid $Q \times Q \rightrightarrows Q$).    

On the other hand, if $G \rightrightarrows Q$ is a Lie groupoid and $g \in G$ then the left-translation by
$g \in G$ and the right-translation by $g$ are the
diffeomorphisms
$$
\begin{array}{lll}
l_{g}: \alpha^{-1}(\beta(g)) \longrightarrow
\alpha^{-1}(\alpha(g))&; \; \;& h \longrightarrow
l_{g}(h) = gh, \\
r_{g}: \beta^{-1}(\alpha(g)) \longrightarrow
\beta^{-1}(\beta(g))&; \; \;& h \longrightarrow r_{g}(h) = hg.
\end{array}
$$
Note that $l_{g}^{-1} = l_{g^{-1}}$ and $r_{g}^{-1} = r_{g^{-1}}$.

A vector field $\tilde{X}$ on $G$ is said to be
left-invariant (resp., right-invariant) if it is
tangent to the fibers of $\alpha$ (resp., $\beta$) and
$\tilde{X}(gh) = (T_{h}l_{g})(\tilde{X}(h))$ (resp.,
$\tilde{X}(gh) = (T_{g}r_{h})(\tilde{X}(g)))$, for $(g,h) \in
G_{2}$.

Now, we will recall the definition of the Lie algebroid
associated with $G$.

We consider the vector bundle $\tau: AG \to Q$, whose fiber at a
point $q \in Q$ is $A_{q}G = V_{\varepsilon(q)}\alpha = Ker
(T_{\varepsilon(q)}\alpha)$. It is easy to prove that there exists a
bijection between the space $\Gamma (\tau)$ of sections of $\tau: AG \to Q$ and the set of
left-invariant (resp., right-invariant) vector fields on $G$. If
$X$ is a section of $\tau: AG \to Q$, the corresponding
left-invariant (resp., right-invariant) vector field on $G$ will
be denoted by $\lvec{X}$ (resp., $\rvec{X}$), where
\begin{equation}\label{linv}
\lvec{X}(g) = (T_{\varepsilon(\beta(g))}l_{g})(X(\beta(g))),
\end{equation}
\begin{equation}\label{rinv}
\rvec{X}(g) = -(T_{\varepsilon(\alpha(g))}r_{g})((T_{\varepsilon
	(\alpha(g))}i)( X(\alpha(g)))),
\end{equation}
for $g \in G$. Using the above facts, one may introduce a 
bracket $\lcf\cdot , \cdot\rcf$ on the space of sections $\Gamma(AG)$ and a bundle map $\rho: AG \to TQ$, which
are defined by
\begin{equation}\label{LA}
\lvec{\lcf X, Y\rcf} = [\lvec{X}, \lvec{Y}], \makebox[.3cm]{}
\rho(X)(q) = (T_{\varepsilon(q)}\beta)(X(q)),
\end{equation}
for $X, Y \in \Gamma(AG)$ and $q \in Q$. 

Since $[\cdot, \cdot]$ induces a Lie algebra structure on the space of vector fields on $G$, it is easy to prove that $\lcf \cdot, \cdot \rcf$ also defines a Lie algebra structure on $\Gamma(AG)$. In addition, it follows that
\[
\lcf X, f Y\rcf = f \lcf X, Y\rcf + \rho(X)(f) Y,
\]
for $X, Y \in \Gamma(AG)$ and $f \in C^{\infty}(M)$.

In other words, we have a Lie algebroid structure on the vector bundle $\tau: AG \to M$ with anchor map $\rho$.
It is the Lie algebroid of $G$.

We remark the following facts:
\begin{itemize} 
	\item
	The Lie algebroid of the pair groupoid $Q \times Q$ over $Q$ is the standard Lie algebroid $TQ \to Q$;
	
	\item
	The Lie algebroid of a Lie group $G$ is the Lie algebra ${\mathfrak g}$ of $G$;
	
	\item
	The Lie algebroid of the Atiyah groupoid $(Q \times Q)/G$ is the Atiyah algebroid $TQ/G$ over $Q/G$ and
	
	\item
	The Lie algebroid of the Lie groupoid $G\pi$ associated with a fibration $\pi: Q \to M$ is the vertical bundle $V\pi$ of $\pi: Q \to M$.  
\end{itemize}

\end{document}